\newcommand{\R}{\mathbb{R}}
\newcommand{\eps}{\varepsilon}
\newcommand{\intr}{\int_{\R^{N}}}
\newcommand{\Ecal}{{\mathcal{E}}}
\newcommand{\Lcal}{{\mathcal{L}}}
\newcommand{\Ncal}{{\mathcal{N}}}
\newcommand{\Ocal}{{\mathcal{O}}}
\def\XXint#1#2#3{{\setbox0=\hbox{$#1{#2#3}{\int}$ }
\vcenter{\hbox{$#2#3$ }}\kern-.6\wd0}}
\newcommand{\wt}{\widetilde}
\newtheorem{proposition}{Proposition}[section]
\newtheorem{theorem}[proposition]{Theorem}
\newtheorem{lemma}[proposition]{Lemma}
\theoremstyle{definition}
\newtheorem{remark}[proposition]{Remark}
\numberwithin{equation}{section}
\newcommand{\beq}{\begin{equation}}
\newcommand{\eeq}{\end{equation}}
\newcommand{\ben}{\begin{enumerate}}
\newcommand{\een}{\end{enumerate}}
\newcommand{\bit}{\begin{itemize}}
\newcommand{\eit}{\end{itemize}}
\newcommand{\dys}{\displaystyle}
\newcommand{\uvmu}{\Upsilon}
\title{Partially concentrating standing waves for weakly coupled Schr\"odinger systems}
\begin{document}

\author{Benedetta Pellacci}
\address[B. Pellacci]{Dipartimento di Matematica e Fisica,
Universit\`a della Campania  ``Luigi Vanvitelli'',  via A.Lincoln 5, 81100
Caserta, Italy.}
\email{benedetta.pellacci@unicampania.it}
\author{Angela Pistoia}
\address[A. Pistoia]{Dipartimento SBAI, Sapienza Universit\`a di Roma,
via Antonio Scarpa 16, 00161 Roma, Italy. }
\email{angela.pistoia@uniroma1.it}
\author{Giusi Vaira}
\address[G. Vaira]{Dipartimento di Matematica, Universit\`a degli studi di Bari ``Aldo Moro'', via Edoardo Orabona 4, 70125 Bari, Italy.}
\email{giusi.vaira@uniba.it}
\author{Gianmaria Verzini}
\address[G. Verzini]{Dipartimento di Matematica, Politecnico di Milano, p.za Leonardo da Vinci 32,  20133 Milano, Italy.}
\email{gianmaria.verzini@polimi.it}
\thanks{Work partially supported by 
the MUR-PRIN-20227HX33Z ``Pattern formation in nonlinear phenomena'',
the project HORIZON EUROPE SEEDS – S51 STEPS: ``STEerability and controllability of PDES in Agricultural and Physical models",
the project "Start" within the program of the University "Luigi Vanvitelli" reserved to young researchers, Piano strategico 2021-2023, the MUR grant Dipartimento di Eccellenza 2023-2027, 
the INdAM-GNAMPA group.}

\subjclass[2010]{35B25, 35J47, 35Q55}

\keywords{Nonlinear Schr\"odinger systems;  singularly perturbed problems; Lyapunov-Schmidt reduction.}

\begin{abstract}
We study the existence of standing waves for the following weakly coupled system of two Schr\"odinger equations
 \[
 \begin{cases}
 i \hslash \partial_{t}\psi_{1}=-\frac{\hslash^2}{2m_{1}}\Delta \psi_{1}+ {V_1}(x)\psi_{1}-\mu_{1}|\psi_{1}|^{2}\psi_{1}-\beta|\psi_{2}|^{2}\psi_{1}
 \medskip\\
 i \hslash \partial_{t}\psi_{2}=-\frac{\hslash^2}{2m_{2}}\Delta \psi_{2}+ {V_2}(x)\psi_{2}-\mu_{2}|\psi_{2}|^{2}\psi_{2}-\beta|\psi_{1}|^{2}\psi_{2},
 \end{cases}
 \]
where $V_1$ and $V_2$ are radial potentials bounded from below. We 
address the case $m_{1}\sim \hslash^2\to0$, $m_2$ constant, and 
prove the existence of a standing wave solution with both nontrivial components 
satisfying a prescribed asymptotic profile. In particular, the second component of 
such solution exhibits a concentrating behavior, while the first one keeps a 
quantum nature.
\end{abstract}

\maketitle
\section{Introduction}

The mathematical analysis of singularly perturbed semilinear elliptic equations and systems has been the object of a 
wide range of studies in the last decades. Among the many motivations, a big role is provided by models in Quantum 
Mechanics, and in particular by the semiclassical analysis of Schr\"odinger-type equations. In this context, one 
postulates that the classical Newtonian Mechanics should be recovered from the Quantum one by letting the Planck 
constant $\hslash$ vanish. Accordingly, the wave function of the quantum particle should concentrate and collapse 
to one or more Dirac's deltas, which position should describe the sharp location of classical particles. 
When different quantum waves interact, e.g. in the case of weakly coupled NLS systems, the 
commonly investigated  setting is the one in which all the waves concentrate in point particles. 
From the analytical point of view, this study  
lets different challenges arise. 
On the one hand, one may ask what is the limit 
concentrating profile at specific energy levels, for instance for ground states; this is typically done exploiting 
variational methods and blow-up analysis. On the other hand, solutions concentrating with prescribed shape and 
position can be constructed, mainly using the Lyapunov-Schmidt reduction approach. 

A largely studied model is the case of a binary mixture  of  Bose-Einstein condensates,
usually described by the Gross-Pitaevskii system,
namely a systems of  two weakly coupled  nonlinear Schrödinger equations
$$
\begin{cases}
 i \hslash \partial_{t}\psi_{1}=-\frac{\hslash^2}{2m_{1}}\Delta \psi_{1}+ {V_1}(x)\psi_{1}-\mu_{1}|\psi_{1}|^{2}\psi_{1}-\beta|\psi_{2}|^{2}\psi_{1} &\text{in $\R^{N}$}
 \medskip\\
 i \hslash \partial_{t}\psi_{2}=-\frac{\hslash^2}{2m_{2}}\Delta \psi_{2}+ {V_2}(x)\psi_{2}-\mu_{2}|\psi_{2}|^{2}\psi_{2}-\beta|\psi_{1}|^{2}\psi_{2} &\text{in $\R^{N}$},
\end{cases}
$$
 for $N=1, 2, 3$.  Here, 
$\psi_1$ and $\psi_2$ are the order parameters of the two components of the mixture, $m_1$ 
and $m_2$ the  corresponding masses, and $\widetilde V$ and $\widetilde W$ the external 
potentials, bounded from below. In general, the  (trapping) potentials may be different, opening 
interesting possibilities concerning the geometrical 
configurations of the condensates. Finally, the interaction parameters $\mu_1,\mu_2,\beta$ 
depend on  the scattering lengths associated to the different states. 
These interaction parameters can be fine-tuned across a wide rage of values, 
profiting from the presence of a Feshbach resonance. For more details on this model we refer to the book by Pitaevskii and Stringari \cite{pi-st}, in particular Chaps. 5 and 21. 

Looking for standing waves $(\psi_{1}(x,t),\psi_{2}(x,t))=
 (e^{iE_{1}t/\hslash}u(x), e^{iE_{2}t/\hslash}v(x))$
  of frequencies $E_{i}$ we have that $(u,v)$ solves 
  \begin{equation}\label{eq:sistsch1}
 \begin{cases}
 -\frac{\hslash^2}{2m_{1}}\Delta u+V(x)u=\mu_{1}u^{3}+\beta v^{2}u  &\hbox{in}\ \mathbb R^N,
 \medskip\\
-\frac{\hslash^2}{2m_{2}}\Delta v+W(x)v=\mu_{2}v^{3}+\beta u^{2}v  &\hbox{in}\ \mathbb R^N,
 \end{cases}
 \end{equation}
 with $V(x)=E_{1}+\widetilde{V}(x)$ and  $W(x)=E_{2}+\widetilde{W}(x)$,
  for $E_{i}$ such that $\inf_{\R^{N}}V>0$  and  $\inf_{\R^{N}}W>0$.
The usually studied case is the one in which $\mu_{i},$ $\beta$ are both very large, or $\hslash$ is very small with respect to the other parameters, so that one is lead to consider
the singularly perturbed elliptic system
 \begin{equation}\label{non-au} 
\begin{cases}
-\varepsilon^2 \Delta u +V_{1} (x)u=\mu_1 u^3+\beta uv^2 &\hbox{in}\ \mathbb R^N,
\\
-\varepsilon^2 \Delta v+V_{2}(x)v=\mu_2 v^3+\beta u^2v & \hbox{in}\ \mathbb R^N.
\end{cases} 
\end{equation}
While the autonomous case, namely the case $V_{i}\equiv \lambda_{i}$ with $\lambda_{i}$ positive constants,   has been widely studied in the last two decades, (see
 the recent paper  \cite{wei-wu} for an exhaustive list of references), a few  results concerning the non-autonomous situation are known.\\
  The first result seems due to  Lin and Wei \cite{lin-wei}
  who studied the case of a binary mixture in a   singularly perturbed regime and in presence of trapping potentials.  
 They prove the existence of   ground state solutions, derive their asymptotic behaviors as $\varepsilon\to0$ and show that each component   has one maximum point (possibly the same), called {\em spike}, which is trapped at the minimum points of the potentials $V_{i}$.
 The existence of a concentrating ground state solutions  was also established  by Montefusco,   Pellacci and Squassina \cite{MPS}, Pomponio \cite{pom},
Ikoma and Tanaka \cite{iko-tan} and 
Byeon \cite{byeon}.
All the previous papers are concerned with system \eqref{non-au} where both the equations are affected by the presence of  the small parameter $\varepsilon$, so that every wave (given by the components of the vector solution)  concentrates  as $\varepsilon$ approaches zero. 

Here, we focus on another type of regime, as
it may happen that only some of the waves act in a semiclassical way, while the others persist in a quantum behavior. 
To the best of our knowledge, this kind of analysis is not present in the PDEs literature yet, and this paper is a first contribution in this direction. 

More precisely, in our study we consider $2m_{1}= \hslash^2$ and $m_{2}=\frac12$ in \eqref{eq:sistsch1}, so that 
 $(u,v) $ 
solves  the following elliptic weakly coupled system
\begin{equation}\label{pro:P0}
\begin{cases}
-\Delta u+V(x)u=\mu_{1}u^{3}+\beta uv^{2} &\text{ in } \R^{N},
\\
-\eps^{2}\Delta v+W(x)v=\mu_{2}v^{3}+\beta vu^{2}&\text{ in } \R^{N}, 
\end{cases}
\end{equation}
where $\eps^2:=\hslash^2$. According to the previous discussion, along this paper we deal 
with the system above  in the singularly perturbed regime $\eps\to0$.
Moreover, our study will deal with the case of   $\mu_{i}>0$ and $\beta<0$, corresponding to positive intraspecies and  to a negative interspecies scattering length, describing a repulsive interaction between the condensates.

Our analysis will include the class of potentials satisfying the following assumptions.
\begin{itemize}
\item[$\bf{(V_{1})}$] $V(x)=V(|x|)$ is a radially symmetric function satisfying
\begin{equation} \label{eq:V0}
V \in C^{0}(\mathbb R^N) \ \hbox{and}\  \inf_{\R^{N}}V(x)>0.
\end{equation}
Moreover, $V$ is  such that  there  exists $ \uvmu \in C^{3}(\R^{N})\cap H^{2}(\R^{N})$  unique positive  radial solution to
\begin{equation} \label{eq:V1}
\begin{cases}
-\Delta \uvmu+V(x)\uvmu=\mu_1 \uvmu^{3}
\\
\uvmu(x)\to 0 \text{ as } |x|\to +\infty,
\end{cases}
\end{equation}
which is non-degenerate in the space $H^{1}_{e}(\R^{N})$ of functions even with respect to each variables, i.e.    
\begin{equation}\label{he}
H^{1}_{e}(\R^{N})\!:=\!\left\{u\in H^{1}(\R^N) : u (\!x_{1},\dots, x_{i},\dots x_{N}\!)\!=\!u (\!x_{1}, \dots,-x_{i}, \dots, x_{N}\!), \,i=1,...\,,N\!
\right\}
\end{equation}
that is  the only solutions to
$$\left\{\begin{aligned}
&- \Delta z +V(x)z =3\mu_1 \uvmu^2 z\ \hbox{in}\ \mathbb R^N\\
&z\in H^{1}_{e}(\R^{N})\\
\end{aligned}\right.$$
is the trivial one.\\

\item[$\bf{(V_{2})}$] 
The potential $V$ is such that for every 
$f\in L^m(\R^N)$, $ 2\le m <+\infty$, there exists a unique solution 
$u\in W^{2,m}(\R^N)$ of the equation
\[
-\Delta u+V(x)u=f,
\]
and
 \[
\|u\|_{W^{2,m}(\R^{N})}\lesssim \|f\|_{L^{m}(\R^{N})}.
\]

\item[$\bf{(W)}$] $W(x)$ is even with respect to all the variables, 
\begin{equation} \label{eq:W0}
W \in C^{3}(\R^{N})\ \hbox{and}\ \inf_{\R^{N}}  W(x)> 0.
\end{equation}
\end{itemize}

 Our main result is stated as follows.
\begin{theorem}\label{thm:principal}
Let $N=2,3$, and suppose that ${\bf (V_{1}), (V_{2})}$ and ${\bf (W)}$ hold. \\
Assume $\beta<0$. Let 
\begin{equation}\label{new-pot}
\omega(x):=W(x)-\beta^2 \uvmu (x), 
\end{equation}
set  $\omega_0:=\omega(0)>0$ and assume that 
\begin{equation}\label{min-pot} 
\dfrac{\partial^2\omega}{ \partial x_1^2} (0)<0.\end{equation} 
Then there exists $\eps_{0}>0$ such that for every $\eps\in (0,\eps_{0})$ there exists a solution
$(u_{\eps},v_{\eps})$ of system \eqref{pro:P0}
   even  with respect to each variable and  having the following asymptotic profile as $\eps\to0$
\begin{equation}\label{eq:profile}
u_{\eps}(x)\sim  \uvmu(x) ,\qquad
 v_{\eps}(x)\sim \sqrt{\frac{\omega_{0}  }{\mu_{2}}}
\left[U\left(\omega_{0}  \frac{x-P_{\eps}}\eps\right)+U\left(\omega_{0}  \frac{x+ P_{\eps}}\eps\right)\right],
\end{equation}
where   $\uvmu$ solves \eqref{eq:V1}, $U$   is the positive radial solution of
\begin{equation} 
-\Delta U+\omega_0  U=\mu_2 U^3\ \hbox{in}\ \R^N
\end{equation}
and  the peaks $ P_{\eps}$ and $-P_\eps$  collapse to the origin as
\begin{equation}\label{eq:peps}
\pm P_{\eps}=\rho_{\eps}  (\pm 1,0,\dots,0), \; \text{ with }\;\lim_{\eps\to0^{+}}\frac{\rho_{\eps}}{\eps\ln(1/\eps)}= \frac{1}{\sqrt{\omega_0  }}.
\end{equation}
\end{theorem}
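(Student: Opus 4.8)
The plan is to use a Lyapunov--Schmidt reduction adapted to the two-speed structure of the system. The key observation is that, because of the weak coupling $\beta uv^2$ appearing in the $u$-equation, if $v_\eps$ is concentrated at scale $\eps$ near two points $\pm P_\eps\to 0$, then $\beta u v_\eps^2$ acts on the first equation essentially as a Dirac mass of total vanishing weight; so at leading order $u_\eps$ solves \eqref{eq:V1} and stays close to $\Upsilon$. Conversely, freezing $u\approx \Upsilon$ in the $v$-equation, the coupling term $\beta v u^2 \approx \beta \Upsilon(x) v$ shifts the effective potential $W$ to $\omega(x) = W(x) - \beta^2\Upsilon(x)$ (the $\beta^2$ rather than $\beta$ comes from rescaling the amplitude of $v$, since near a concentration point one expects $u^2 \sim -\beta v^2/\text{something}$ at lower order — more precisely from the Ansatz on the amplitude), whence $v_\eps$ behaves like a sum of two scaled copies of the ground state $U$ of $-\Delta U + \omega_0 U = \mu_2 U^3$. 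First I would set up the functional-analytic framework: work in $H^1_e(\R^N)\times H^1_e(\R^N)$ (even functions), rewrite \eqref{pro:P0} in fixed-point form via the solution operators provided by $\bf (V_1)$ and $\bf (V_2)$, and define the approximate solution
\[
W_\eps = \left(\Upsilon,\ \sqrt{\tfrac{\omega_0}{\mu_2}}\Bigl[U_{\eps,P_\eps} + U_{\eps,-P_\eps}\Bigr]\right),\qquad U_{\eps,\xi}(x):=U\!\left(\omega_0\tfrac{x-\xi}\eps\right),
\]
keeping $\rho = |P_\eps|$ as a free parameter in a range $\rho \sim \eps\ln(1/\eps)$.

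Next I would carry out the reduction. Linearizing \eqref{pro:P0} at $W_\eps$, the relevant linear operator splits (up to small coupling errors) into the operator $-\Delta + V - 3\mu_1\Upsilon^2$ on the first component, invertible on $H^1_e$ by the non-degeneracy assumption in $\bf (V_1)$, and, on the second component, the operator $-\eps^2\Delta + \omega - 3\mu_2 v_\eps^2$, whose approximate kernel after rescaling is spanned by the translation modes $\partial_{x_1} U_{\eps,\pm P_\eps}$ (only the $x_1$-direction survives the evenness constraint). One then inverts the linearized operator modulo this one-dimensional approximate kernel, uniformly in $\eps$ and in the admissible range of $\rho$, using standard nondegeneracy of $U$ in $H^1_r$ together with the exponential decay of $U$ and the large separation $|P_\eps|/\eps\to\infty$ to control the interaction. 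A contraction-mapping argument then produces, for each admissible $\rho$, a genuine solution of \eqref{pro:P0} of the form $W_\eps + \phi_{\eps,\rho}$ orthogonal to the approximate kernel, with $\|\phi_{\eps,\rho}\|$ quantitatively small; here one must carefully estimate the error $\mathcal{R}(W_\eps)$ obtained by plugging $W_\eps$ into the system, the dominant contributions being the mismatch $\beta\Upsilon v_\eps - \beta^2\Upsilon v_\eps$-type terms, the Taylor expansion of $\omega$ around $0$, and the interaction between the two bumps, which is of order $e^{-2\rho/(\eps\sqrt{\omega_0})}$ — this is precisely what forces $\rho\sim\eps\ln(1/\eps)$.

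Finally I would solve the reduced (bifurcation) equation. Projecting the full equation onto the approximate kernel yields a single scalar equation $c_\eps(\rho)=0$, and the reduced energy $\rho\mapsto J_\eps(W_\eps+\phi_{\eps,\rho})$ has, after rescaling, an expansion whose leading $\rho$-dependent part is a competition between the attractive interaction of the two bumps, of size $\sim e^{-2\rho/(\eps\sqrt{\omega_0})}$, and the repulsive effect of the potential well $\omega$, of size $\sim \rho^2\,\partial^2_{x_1x_1}\omega(0)$; the hypothesis $\partial^2_{x_1x_1}\omega(0)<0$ makes $0$ a local max of $\omega$ in the $x_1$-direction, so pushing the bumps apart costs energy while the interaction also pushes them apart, and balancing the two terms gives a critical point at $\rho_\eps$ with $\rho_\eps/(\eps\ln(1/\eps))\to 1/\sqrt{\omega_0}$ as in \eqref{eq:peps}. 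I expect the main obstacle to be the uniform invertibility of the linearized operator in the regime $\rho\sim\eps\ln(1/\eps)$: this is a borderline scaling where the inter-bump interaction is only polynomially (in $\eps$) smaller than the other error terms, so the a priori estimates on $\phi_{\eps,\rho}$ and on $\partial_\rho\phi_{\eps,\rho}$ must be sharp enough (in suitable weighted norms tracking the $\eps$-concentration) to guarantee that the reduced equation is genuinely dominated by the two competing terms above and not by the remainder.
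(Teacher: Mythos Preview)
Your overall strategy---Lyapunov--Schmidt reduction, a two-bump ansatz for $v$, and a one-dimensional reduced equation balancing the bump interaction $e^{-2\sqrt{\omega_0}\rho/\eps}$ against the Hessian term $\partial_{11}^2\omega(0)\,\eps\rho$---is the paper's. But there is a genuine gap: your ansatz $(\Upsilon,\ \text{two bumps})$ is too crude to close the reduction. With this ansatz the error in the \emph{first} equation is $\beta\,\Upsilon\, U_\eps^2(\cdot/\eps)$, whose $L^2$-norm is only $O(\eps^{N/2})$, forcing the remainder $\varphi$ in the $u$-component to be of size $\eps^{N/2}$. The reduced equation, however, lives at scale $\eps\rho_\eps\sim\eps^2|\ln\eps|$, and the cross term $\int U_\eps\,\Upsilon(\eps x)\,\varphi(\eps x)\,Z_\eps$ coming from $\mathcal L_2$ then contributes (after subtracting $\varphi(0)$ and using $H^2\hookrightarrow C^{0,\alpha}$) a term of order $\eps^{1+\alpha}|\ln\eps|^\alpha$ for $N=2$, which is \emph{not} $o(\eps^2|\ln\eps|)$. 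The paper fixes this by inserting two explicit correction terms into the ansatz: $u=\Upsilon+\beta\Phi_\eps+\varphi$ with $\Phi_\eps$ solving $-\Delta\Phi_\eps+(V-3\mu_1\Upsilon^2)\Phi_\eps=\Upsilon\,U_\eps^2(\cdot/\eps)$, and then a further correction $\Psi_\eps$ in $v$ to absorb the new error $2\beta^2\Upsilon(0)\Phi_\eps(0)U_\eps$ created by $\Phi_\eps$. Only after both corrections is the total error $O(\eps^2|\ln\eps|^2)$, small enough for the two leading terms to genuinely dominate the reduced equation. The paper flags this explicitly as the main new difficulty (``in view of the square growth of the coupling term, we need to correct the ansatz of both the components'').

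Two secondary points. First, the paper works in $H^2$, not $H^1$: the Sobolev embedding $H^2\hookrightarrow C^{0,\alpha}$ is used in an essential way to get pointwise bounds $|\varphi(\eps x)-\varphi(0)|\lesssim\eps^{2+\alpha}|\ln\eps|^2|x|^\alpha$ and $|\Phi_\eps(\eps x)-\Phi_\eps(0)|\lesssim\eps^2|x|^{2-N/m}$ when projecting onto $Z_\eps$; an $H^1$ framework would not give these. Second, the modified potential is $\omega(x)=W(x)-\beta\,\Upsilon^2(x)$, arising directly from $\beta u^2 v\approx\beta\Upsilon^2 v$ in the second equation (the $\beta^2\Upsilon$ in the displayed statement is a typo); no amplitude rescaling is involved, so your attempted explanation of the $\beta^2$ is off the mark.
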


Theorem \ref{thm:principal} states the existence of a solution whose first component looks like 
a genuine solution to \eqref{eq:V1}, in particular it does not concentrate, and whose second component concentrates at two  opposite points which collapse to the origin  as $\eps$ goes to 0.
As a consequence of the coupling in the equations, the first component of \eqref{pro:P0}  plays the  role of  an additional potential  in the singularly perturbed second equation, 
so that the concentration   will be triggered by the {\em modified  potential} $W-\beta \Upsilon^{2}$.\\
Because of the assumption  $\beta<0$, we will obtain a solution in the repulsive regime, and,  when the origin 
is a maximum point of $W$ our solution exists for every $\beta$ negative; 
while when $\partial^{2}_{11}W(0)>0$ we obtain a solution for $\beta< \beta_{0}<0$ (see \eqref{eq:ipobeta}).
\\

Let us make some comments.
\\

\begin{remark}\label{rem:deltaY(0)not0}
We point out that, in case $\uvmu$ satisfies  
\begin{equation}\label{ipotesi}
\Delta\uvmu(0) \not=0,
\end{equation}   
then assumption  \eqref{min-pot} is satisfied as long as
 \begin{equation}\label{eq:ipobeta}
\beta<  
-\frac{{\partial^{2}W\over {\partial x_1}}(0)}{2\uvmu(0)|{\partial^{2}\uvmu\over {\partial x_1}}(0)|}.
\end{equation}
On the other hand, assumption \eqref{ipotesi} is verified in case $V$ is radially non-decresing near $0$ and $\uvmu$ is radial and has a (local) strict 
maximum at $0$, as one can verify applying Hopf's Lemma. Indeed, 
first we observe that, in such a case, for any $i$ the function $ \partial_i \uvmu$ solves
$$\Delta \partial_i \uvmu-V\partial_i \uvmu=-3\uvmu^2\partial_i \uvmu+\uvmu\partial_i V\ge0\ \hbox{in}\ \Omega_i:=\left\{|x|\le r_{0}\ :\ x_i\ge0\right\},$$
for $r_0$ small, because $\partial_i \uvmu\le 0$ and  $ \partial_i V\ge0$ in $\Omega_i$.
Moreover
$$0=\partial_i \uvmu(0)=\max\limits_{\Omega_i}\partial_i \uvmu.$$
By Hopf's Lemma we deduce $\partial_{ii}\uvmu(0)<0 $ and \eqref{ipotesi} follows.
\end{remark}

\begin{remark}\label{rema1}
It is useful to recall the classical results concerning the case of  
 constant potential, i.e.
 \begin{equation}\label{Uc}
-\Delta U+\lambda U=\mu U^{3}\ \hbox{in}\ \R^N,\ U\in H^1(\R^N).
\end{equation}
It is well known that \eqref{Uc} has  an unique positive solution which is radially symmetric   and also that
the  set of solution of the corresponding linearized 
equation
$$-\Delta z+\lambda z=3\mu U^{2}z\ \hbox{in}\ \R^N,\ z\in H^1(\R^N)$$
 is spanned by the $N$ partial derivatives  $ {\partial U\over \partial x_i}$ which are  odd in each variable.
In addition,   $U$    is radially decreasing and it satisfies the following exponential decay (see \cite{beli1,beli2, kwong})
\begin{equation}\label{eq:Udecay}
\lim_{|x|\to\infty}U(x)e^{\sqrt{\lambda}|x|}|x|^{\frac{N-1}2}=C_{0}>0,
\qquad
\lim_{|x|\to\infty}\frac{U'(x)}{U(x)}=-1.
\end{equation}
 \end{remark}
 
\begin{remark}
We observe that a class of potentials $V$  which satisfy  hypotheses ${\bf (V_{1})}$ and ${\bf (V_{2})}$ includes both the constant potentials and, 
at least in dimension $N=3$, the trapping ones, like  
$V(x)=\lambda+ |x|^{m}$ for some $\lambda>0$ and $m>0$. 
Indeed, ${\bf (V_{1})}$ follows from Remarks \ref{rema1}, \ref{rem:deltaY(0)not0} and \cite[Corollary 1.5 and Appendix A]{bo}.
On the other hand, ${\bf (V_{2})}$ follows by standard elliptic theory in the case of constant potentials (see e.g. 
\cite[Thm. 3, p. 135]{MR290095}), while for trapping ones it is a consequence of \cite{MR1004744,MR970154,shen} (as we mentioned, \cite{shen} deals only with dimensions $N\ge3$; we believe that a version of such results should hold also in lower dimension, but this is far beyond the aim of this paper).
\end{remark}

  \begin{remark}
Our result relies on the simmetry of the potentials $V$ and $W$ which allows to build symmetric solutions  with  symmetric peaks
 $P_\varepsilon$ and $-P_\varepsilon$ collapsing to the origin.
We strongly believe that a   similar construction could be carried out in a more general setting in the spirit of Kang  and Wei  \cite{kw},
 when the radial solution of the first equation \eqref{eq:V1} is non-degenerate in the whole space  $H^1(\mathbb R^N)$ (i.e. $V$ is a trapping potential as in \cite{bo, shen}).  In that case, it should be possible to build a solutions whose first component resembles the radial solution $\uvmu$ of \eqref{eq:V1} and the second component has two different peaks collapsing to a maximum point of the modified potential 
 $W -\beta \uvmu ^2 $.
 \end{remark}

\begin{remark}
We will prove Theorem \ref{thm:principal}  using a classical Lyapunov-Schmidt reduction.
This will allow us to build each component with a prescribed profile: 
the first component will look as one bump solution for $\varepsilon $  sufficiently small,
while the second will  develop   two spikes  collapsing at the origin and will be exponentially 
small far from them.
In performing this classical procedure, we faced some new difficulties. First of all,
in view of the square growth of the coupling term,  we need to correct the ansatzs of both the components to detect the suitably reduced problem.
Moreover, the use of the regularity theory will be crucial  in order to make suitable expansion of all the  terms involved in the construction.
\\
Our existence result does not cover the case $N=1$, as  in this case the size of the error term does not produce the suitable smallness of the reminders terms in the ansatz despite of the presence of the correction term. We think that this  point could be managed introducing further correction terms, again in both the equations, which at the prices of heavy technicality should allow to construct a remaining term sufficiently small. 
\end{remark}

\begin{remark}
Our result deals with the case of a binary mixture and it is natural to ask  
 if our construction can be extended to the case of a larger number of equations, i.e.
$$ 
\begin{cases}
-  \Delta u +V (x)u=\mu_1 u ^3+ u \sum\limits_{j=1}^k \beta_{j} v_j^2 &
 \hbox{in}\ \mathbb R^N,
 \\
-\eps^2 \Delta v_j +W_j(x)v_j=\mu_j v_j^3+v_j  \left(\beta_j u^2+ \sum\limits_{i\not=j}^k \beta_{ij} v_i^2\right) & \hbox{in}\ \mathbb R^N,\ j=2,\dots,k.
\end{cases}
$$
In particular, we wonder if it is possible to build a solution whose components $v_j$ concentrate at different pairs of points $(P_j^\eps,-P_j^\eps)$ collapsing to the origin as $\eps$ goes to zero.

\end{remark}

\begin{remark}

The unperturbed version of  system  \eqref{non-au} (let us say $\eps=1$) was firstly studied by Peng and Wang \cite{peng-wang}  who (in presence of radial potentials)
 constructed an unbounded sequence of non-radial 
solutions exhibiting an arbitrarily large number of peaks. Their result has been successively extended to the case of more than two equations by  
Pistoia and Vaira \cite{pv} and  very recently by Li, Wei and Wu \cite{li-wei-wu}.\\
We wonder if it is possible, by combining the ideas used in the above papers, to produce a solution to system  \eqref{pro:P0} with $\eps=1$ whose first component looks like the solution to  \eqref{eq:V1} and second component concentrates at an arbitrary large number of points approaching infinity as $\eps$ goes to zero.

\end{remark}

\begin{remark}
Let us finally observe that the existence of solutions to the system \eqref{non-au} is closely related to the study of  the normalized solutions  
for nonlinear Schrödinger systems. We refer the reader to the recent papers  by Lu \cite{lulu}, Liu and Yang \cite{liu-yang}, Guo and Xie \cite{guo-xie} and Liu and Tian \cite{liu-tian}.  
In particular, it would be interesting to produce normalized solutions using as a parameter their $L^2$-norms, in the spirit of the results
obtained by Pellacci, Pistoia, Vaira and Verzini \cite{ppvv}.

\end{remark}

The paper is organized as follows.
In the next section we  set  the problem, 
by introducing the  the main 
blocks of our construction and by reformulating problem \eqref{pro:P0}  as a system of two equations, one set in an infinite dimensional set, 
the other, called the reduced problem, in a  finite dimensional one.
In Section \ref{sec:equainfdim} we solve the  the infinite dimensional equation.
Finally, in Section \ref{reduced} we study the reduced problem and we complete the proof of Theorem \ref{thm:principal}.
\\

\textbf{Acknowledgments.}
The authors warmly thank the anonymous referee for her/his precious comments, and in particular for having pointed out a gap in the proof of Lemma \ref{phi}  in a previous version of this manuscript.


\section{Setting of the problem}\label{setting}

Let us introduce the Banach spaces
\begin{equation}\label{eq:defXV}
\begin{split}
H^2_{V}
&
:=\left\{u\in H^2(\R^{N}) :  \intr V(x) u^{2}<+\infty\right\}, 
\\
H^2_{W_{\eps}}&
:=\left\{u\in H^2(\R^{N}) :  \intr W(\eps x) u^{2}<+\infty\right\} .
\end{split}
\end{equation}
equipped with the norms
$$\|u\| _V:=\left(\intr\sum\limits_{|\alpha|=2}|D^\alpha u|^2+\intr  |\nabla u|^2+\intr V(x) u^{2}\right)^\frac12$$
and
$$
\|u\| _\eps:=\left(\intr\sum\limits_{|\alpha|=2}|D^\alpha u|^2+\intr  |\nabla u|^2+\intr W(\eps) (x) u^{2}\right)^\frac12
$$
Henceforth, we omit the subscript $\eps$ in $u,\,v$ and  we agree that $a\lesssim b$ means $|a|\le c |b|$ for some constant $c$ which does not depend on $a$ and $b$.
\\
Performing a change of variable in the second equation,  we are lead  to seek a solution $(u,v) $ of  
\begin{equation}\label{pro:P2}
\begin{cases}
-\Delta u+V(x)u=\mu_{1}u^{3}+\beta u{v}^{2}\left(\frac{x}\eps\right) &\text{ in } \R^{N}, 
\\
-\Delta v+W(\eps x)v=\mu_{2}v^{3}+\beta u^{2}(\eps x)v &\text{ in } \R^{N}.
\end{cases}
\end{equation}  in  the space 
\[
X=\{(u,v)\in H^2_V\times H^2_{W_\eps}\ :\ \hbox{$u,v$ are  even functions (see \eqref{he})}\}.
\]
In the next subsection,  we introduce the main building blocks in the construction of our solution.
\subsection{The ansatz and the correction terms.}
 In view of assumptions $\bf{(V_{1})}$ and $\bf{(V_{2})}$, we can consider   $\uvmu$  the 
 solution of
\begin{equation}\label{uvmu}
-\Delta \Upsilon+V(x) \Upsilon=\mu_1 \Upsilon^3\ \hbox{in}\ \R^N
\end{equation}
and      $U$   be the solution of
\begin{equation}\label{eq:Ulambdamu}
-\Delta U+\omega_0  U=\mu_2 U^3\ \hbox{in}\ \R^N,
\end{equation}
where, since $\beta<0$
\[
\omega_0  :=W(0)-\beta \uvmu^{2}(0)>0.
\]
We look for a solution $(u, v)$ of \eqref{pro:P2} of the form 
\
\begin{equation}\label{ans}
u(x)=\underbrace{\uvmu(x)+\beta\Phi_\eps (x)}_{=:\Xi_\eps (x)} +\varphi(x),\qquad
v(x)=\underbrace{ U_\eps (x)+\beta\Psi_\eps(x)}_{:=\Theta_\eps(x)}+\psi(  x).
\end{equation}
where
$$ U_\eps(x):= U  \left(x-\frac{P_{\eps}}{\eps} \right)
+U  \left(x+ \frac{P_{\eps}}{\eps}\right)=U_{-P_{\eps}}(x)+U_{P_{\eps}}(x)$$
and
the  concentration points satisfy (see \eqref{eq:peps})
\begin{equation}\label{punti}
P_{\eps}=\rho_{\eps}P_{0}=\rho_{\eps} (1,0,\dots,0), \ \rho_{\eps} =d{\eps\ln(1/\eps)},\ d\in\left(\frac1{\sqrt{\omega_{0}}}-\delta,\frac1{\sqrt{\omega_{0}}}+\delta\right)\ \hbox{for some}\ \delta>0.
\end{equation}
The function $\Phi_\eps $ and $\Psi_{\eps}$, are suitable correction terms, 
whose existence and properties are established in Lemma \ref{phi}, \ref{le:Psi}. 
The reimander terms $\varphi$ and $\psi$ belong to the space 
\[
  K^{\perp} :=\left\{(\varphi,\psi)\in X\ :\ \intr \psi(x)Z_\eps(x)dx=0\right\},
  \]
where \(  K:=\mathtt{span}\{(0,Z_\eps)\} \subset H^{2}\)
and 
\begin{equation}\label{eq:defU2}
Z_\eps (x):=
\partial_1 U  \Big(x+\frac{P_{\eps}}{\eps}\Big)-\partial_1 U  \Big(x-\frac{P_{\eps}}\eps\Big),
\end{equation}
 solves the linear equation
 \begin{equation}\label{eq:eqZ}
 -\Delta Z_\eps   +\omega_0  Z_\eps  =3\mu_2\left(U^2_{P_\eps}\partial_{x_1}U_{P_\eps}- U^2_{-P_\eps}\partial_{x_1} U_{-P_\eps}\right).
 \end{equation}
Moreover, it is worthwhile to point out that all the functions $\Phi_\eps ,$ $U_\eps$ and $Z_\eps $ are even functions.

In the following we introduce the two correction terms we need in our construction of the solution.
Let us start from the term in the first component.
\begin{lemma}\label{phi} 
There exists a unique  even  $\Phi_\eps\in H^2_V(\R^N)$ solution of the equation
\begin{equation} \label{eq:V}
-\Delta \Phi_\eps  +\left(V(x)-3\mu_{1}\uvmu^{2}(x)\right)\Phi_\eps 
= \uvmu(x) U_\eps  ^{2}\left(\frac{x}\eps\right).
\end{equation}
Moreover, $\Phi_{\eps}$ satisfies  $\|\Phi_\eps\|_{W^{2,m}(\R^N)}\lesssim \eps^{N\over m}$ and 
$ \|\Phi_\eps\|_{C^{1,1-\frac N m}(\R^N)}\lesssim \eps^{N\over m}$ for any $m\ge2.$
\end{lemma}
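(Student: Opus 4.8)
The plan is to solve \eqref{eq:V} by a fixed-point / invertibility argument based on the non-degeneracy hypothesis $\mathbf{(V_1)}$, and then to extract the claimed decay rates from the regularity assumption $\mathbf{(V_2)}$ together with the concentration of the right-hand side. For existence and uniqueness, I would consider the linear operator $L\Phi := -\Delta\Phi + (V(x)-3\mu_1\uvmu^2(x))\Phi$ acting on the space $H^2_e := H^2_V(\R^N)\cap H^1_e(\R^N)$ of even functions. Writing $L = L_0 - 3\mu_1\uvmu^2$ with $L_0\Phi = -\Delta\Phi + V(x)\Phi$, assumption $\mathbf{(V_2)}$ (with $m=2$) gives that $L_0 \colon H^2_V \to L^2$ is an isomorphism, while multiplication by $3\mu_1\uvmu^2 \in L^\infty$ (recall $\uvmu\in C^3\cap H^2$, so it is bounded) is a compact perturbation of $L_0^{-1}$ composed with the compact embedding of $H^2_V$ locally; more precisely $\Phi\mapsto L_0^{-1}(3\mu_1\uvmu^2\Phi)$ is compact on $L^2$ because $\uvmu^2$ decays at infinity. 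Hence $L$ is Fredholm of index zero on the even subspace, and by the Fredholm alternative it is invertible there precisely because its kernel in $H^1_e(\R^N)$ is trivial — which is exactly the non-degeneracy statement in \eqref{he}. This yields a unique even solution $\Phi_\eps = L^{-1}\bigl(\uvmu\, U_\eps^2(\cdot/\eps)\bigr)$, and the evenness of $\Phi_\eps$ follows since the right-hand side $\uvmu(x)U_\eps^2(x/\eps)$ is even (both $\uvmu$ and $U_\eps$ being even).

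For the quantitative bounds, the key point is to estimate the $L^m$-norm of the datum $g_\eps(x) := \uvmu(x)\, U_\eps^2(x/\eps)$. Since $\uvmu$ is bounded and $U$ decays exponentially by \eqref{eq:Udecay}, one has $\|g_\eps\|_{L^m(\R^N)} \lesssim \|U_\eps^2(\cdot/\eps)\|_{L^m(\R^N)}$; the change of variables $y = x/\eps$ produces a factor $\eps^{N/m}$, and $\|U_\eps^2\|_{L^m(\R^N)} = \|U_\eps\|_{L^{2m}(\R^N)}^2 \lesssim 1$ uniformly (the two translated copies of $U$ are in $L^{2m}$ with norm independent of the well-separated centers $\pm P_\eps/\eps$). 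Hence $\|g_\eps\|_{L^m} \lesssim \eps^{N/m}$. Now I would like to apply the elliptic estimate of $\mathbf{(V_2)}$ to the equation $L_0\Phi_\eps = g_\eps + 3\mu_1\uvmu^2\Phi_\eps$; since $3\mu_1\uvmu^2\Phi_\eps \in L^m$ once $\Phi_\eps\in L^m$, and $\|3\mu_1\uvmu^2\Phi_\eps\|_{L^m}\lesssim\|\Phi_\eps\|_{L^m}$, a bootstrap using the boundedness of $L^{-1}$ on the even subspace (which also gives $\|\Phi_\eps\|_{L^m}\lesssim\|g_\eps\|_{L^m}$, after first establishing the $L^m$-theory for $L$ itself via the same Fredholm argument in $W^{2,m}$, or by Sobolev embedding from the $L^2$-bound in low dimension) yields $\|\Phi_\eps\|_{W^{2,m}(\R^N)} \lesssim \|g_\eps\|_{L^m} \lesssim \eps^{N/m}$. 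Finally, the Morrey embedding $W^{2,m}(\R^N)\hookrightarrow C^{1,1-N/m}(\R^N)$ for $m > N$ upgrades this to $\|\Phi_\eps\|_{C^{1,1-N/m}(\R^N)}\lesssim \eps^{N/m}$.

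The main obstacle I anticipate — and indeed this is flagged in the acknowledgments as the gap a referee pointed out — is the passage from the $L^2$-solvability to the uniform $W^{2,m}$-estimate with the \emph{right} power $\eps^{N/m}$ for all $m\ge 2$ simultaneously, uniformly in $\eps$. The delicate issue is that the Fredholm/invertibility argument for $L$ is cleanest in $L^2$, but the sharp decay in $\eps$ comes from the $L^m$-size of $g_\eps$ with large $m$; one must therefore propagate invertibility of $L$ (not merely $L_0$) to the $W^{2,m}$ scale and check that the operator norm of $L^{-1}$ there does not blow up — this requires care because $V$ may be a trapping potential growing at infinity, so that $W^{2,m}_V$-type spaces must be handled via $\mathbf{(V_2)}$ rather than classical Calderón–Zygmund theory on all of $\R^N$. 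A safe route is: first bound $\|\Phi_\eps\|_{L^2}\lesssim\|g_\eps\|_{L^2}\lesssim\eps^{N/2}$ from the even non-degeneracy; then feed $g_\eps + 3\mu_1\uvmu^2\Phi_\eps$ into the $W^{2,m}$-estimate of $\mathbf{(V_2)}$ and use the local elliptic regularity plus the exponential localization of $\uvmu$ to control $\|3\mu_1\uvmu^2\Phi_\eps\|_{L^m}$ by $\|\Phi_\eps\|_{L^2}$ (interpolating, since $\uvmu^2\in L^\infty\cap L^r$ for every $r$), obtaining the claim after a finite bootstrap; one pays attention to the fact that all the implied constants depend only on $V$, $\mu_1$ and $m$, not on $\eps$, precisely because the two bumps of $U_\eps$ stay at mutual distance $\gtrsim \rho_\eps/\eps = d\ln(1/\eps)\to\infty$, so no interaction term spoils the uniformity.
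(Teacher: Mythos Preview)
Your proposal is correct and follows essentially the same route as the paper: Fredholm invertibility of $L$ on the even subspace via $\mathbf{(V_1)}$--$\mathbf{(V_2)}$ gives the $H^2_V$ solution with $\|\Phi_\eps\|_{H^2}\lesssim\|g_\eps\|_{L^2}\lesssim\eps^{N/2}$, and then one rewrites $L_0\Phi_\eps = g_\eps + 3\mu_1\uvmu^2\Phi_\eps$ and applies $\mathbf{(V_2)}$ in $L^m$ together with Sobolev embedding. The only cosmetic difference is that the paper controls $\|3\mu_1\uvmu^2\Phi_\eps\|_{L^m}$ by first passing through $H^2\hookrightarrow C^{0,1/2}$ to get $\|\Phi_\eps\|_{L^\infty}\lesssim\eps^{N/2}$ and then using $\uvmu^2\in L^m$, whereas you suggest H\"older directly from $\|\Phi_\eps\|_{L^2}$; both yield the bound $\eps^{N/2}\le\eps^{N/m}$ for this term and the argument closes identically.
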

\begin{proof}
By exploiting assumptions  $\bf{(V_{1})}-\bf{(V_{2})}$ we deduce that  for any even function $f\in L^2(\R^N)$ the problem
\begin{equation*}
-\Delta \Phi   +\left(V(x)-3\mu_{1}\uvmu^{2}(x)\right)\Phi  = f\ \hbox{in}\ \R^N
\end{equation*}
has a unique even solution $\Phi$ such that
 $\|\Phi\|_V\le c \|f\|_{L^2(\R^N)},$ for some constant $c$ which does not depend on $f$.
Now we point that the function $f(x)=\uvmu(x) U_\eps  ^{2}\left(\frac{x}\eps\right)$ is an even function with
$$\left\|\uvmu U_\eps  ^{2}\left(\frac{\cdot}\eps\right)\right\|_{L^2(\R^N)}\lesssim \eps^{\frac N2}.$$
Indeed, by scaling $x=\eps y$ we immediately deduce
\begin{equation}\label{eq:ueps}
\intr \uvmu^2(x)U^{4}_{\eps}\left(\frac{x}\eps\right)dx\lesssim
\intr \uvmu^2(x)\left(
U^4  \left(\frac{x-P_{\eps}}\eps\right)+U^4  \left(\frac{x+P_{\eps}}\eps\right)\right)dx\lesssim  \eps^N.
\end{equation}
As a direct consequence, we infer
$$
\|\Phi_{\eps}\|_{H^2(\R^N)}\lesssim\eps^\frac N 2 .
$$
This implies that $\Phi_{\eps}\in C^{0,\frac12}(\R^{N})$  
and \(\|\Phi_{\eps}\|_{C^{0,\frac12}(\R^{N})}\lesssim \eps^{\frac{N}2}\). 
\\
Now, we write \eqref{eq:V}
as 
$$-\Delta \Phi   + V(x) \Phi  =\underbrace{-3\mu_{1}\uvmu^{2}(x)\Phi(x)}_{=f_1(x)}+
\underbrace{\uvmu (x)U_{\eps}\left(\frac{x}\eps\right)}_{=f_2(x)}\ \hbox{in}\ \R^N
$$ 
and we observe that, reasoning as in \eqref{eq:ueps}, we deduce that
$$
\|f_1\|_{L^m(\R^N)}\lesssim\eps^\frac N2\ \hbox{and}\ \|f_2\|_{L^m(\R^N)}\lesssim\eps^\frac Nm,
\; \text{ for any $m\ge 2$};
$$
then, assumptions $\bf{(V_{2})}$ implies,  
$$ 
\|\Phi_{\eps}\|_{W^{2,m}(\R^N)}\lesssim\eps^\frac N m,
$$
so that, choosing  $m>N$,     Sobolev embedding
$W^{2,m}(\R^N)\hookrightarrow C^{1,1-\frac Nm}(\R^N)$ yields the claim.
\end{proof}
\begin{remark} \label{re:regphi}
It is useful to remark that  by Lemma \eqref{phi}  since $\nabla \Phi_{\eps}(0)=0$ we deduce
 \begin{equation}\label{key1}
| \Phi_{\eps}(y)-\Phi_{\eps}(0)|  \lesssim |y|^{2-\frac Nm}\|\Phi_{\eps}\|_{C^{1,1-\frac Nm}(\R^N)}\lesssim 
 \eps  ^{\frac Nm}|y|^{2-\frac Nm}
\end{equation}
Moreover, since $\nabla \uvmu (0)=0$ we also have
 \begin{equation}\label{key2}
| \uvmu(y)-\uvmu(0)|  \lesssim |y|^{2 }\|\uvmu\|_{C^{2}(\R^N)}\lesssim 
 |y|^{2 }.
\end{equation}
\end{remark}

\begin{lemma}\label{le:Psi}
There exists a unique  
$\Psi_\eps\in H^2 (\R^N)$,  solution of the equation 
\begin{equation}\label{eq:Psi}
-\Delta \Psi_\eps  + \left(\omega_0 -3\mu_{2} \left(U_{P_\eps}^{2}(x) +U_{-P_\eps}^{2}(x)\right) \right)\Psi_\eps = 2 \beta \Phi_\eps(0)\uvmu(0) U_{\eps}(x). \; 
\end{equation}
Moreover,   $\|\Psi_\eps\|_{H^2(\R^N)}\lesssim \eps^{N\over2}$ and 
 there exist $B,\, \gamma,\, R_{0} >0$ such that
\[
| \Psi_{\eps} (x)|\lesssim \eps^{N/2}\left( e^{-\gamma |x+\frac{P_{\eps}}\eps|}+e^{-\gamma |x-\frac{P_{\eps}}\eps|}\right),\qquad \forall\, |x|\geq R_{0}.
\]
\end{lemma}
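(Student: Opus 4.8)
The plan is to obtain $\Psi_\eps$ by inverting the linear operator on the left of \eqref{eq:Psi}, and to read off the pointwise estimate from the exponential decay of the datum. Write $g_\eps(x):=2\beta\Phi_\eps(0)\uvmu(0)U_\eps(x)$ and $L_\eps w:=-\Delta w+\omega_0 w-3\mu_2\bigl(U_{P_\eps}^2+U_{-P_\eps}^2\bigr)w$, so that \eqref{eq:Psi} reads $L_\eps\Psi_\eps=g_\eps$. As a preliminary one estimates the datum: by Lemma \ref{phi}, $\|\Phi_\eps\|_{H^2(\R^N)}\lesssim\eps^{N/2}$, hence---since $N=2,3$---the Sobolev embedding $H^2(\R^N)\hookrightarrow L^\infty(\R^N)$ gives $|\Phi_\eps(0)|\lesssim\eps^{N/2}$; as $\|U_\eps\|_{L^2(\R^N)}\lesssim1$ this yields $\|g_\eps\|_{L^2(\R^N)}\lesssim\eps^{N/2}$, while \eqref{eq:Udecay} gives, for every fixed $\gamma\in(0,\sqrt{\omega_0})$, the pointwise bound $|g_\eps(x)|\lesssim\eps^{N/2}\bigl(e^{-\gamma|x-P_\eps/\eps|}+e^{-\gamma|x+P_\eps/\eps|}\bigr)$.

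The core of the argument is the invertibility of $L_\eps$ together with a quantitative control of the inverse of $g_\eps$. As \eqref{eq:Psi} is invariant under the reflections $x_i\mapsto-x_i$, we look for an even solution and regard $L_\eps$ as acting on the subspace $\widetilde{H}\subset H^2(\R^N)$ of functions even in each variable. The multiplication by $3\mu_2(U_{P_\eps}^2+U_{-P_\eps}^2)$ is a relatively compact perturbation of $-\Delta+\omega_0$, so $L_\eps$ is Fredholm of index zero on $\widetilde{H}$ and invertibility amounts to injectivity. By the nondegeneracy of $U$ recalled in Remark \ref{rema1}, the only eigenvalues of $L_\eps$ that do not stay away from $0$ uniformly in $\eps$ come from the translation modes $\partial_iU$ centred at $\pm P_\eps/\eps$; among their linear combinations the only one lying in $\widetilde{H}$ is $Z_\eps$, and a direct computation using the equations for $\partial_1 U_{\pm P_\eps}$ shows that $L_\eps Z_\eps$ is made only of products localized near both bumps, so the corresponding eigenvalue $\lambda_\eps$ of $L_\eps$ is exponentially small but, for $\eps$ small, nonzero (the usual double-well splitting). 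By the standard nondegeneracy-modulo-kernel argument (contradiction and concentration-compactness, reducing to the nondegeneracy of $U$) one gets injectivity of $L_\eps$ on $\widetilde{H}$ and a uniform a priori estimate $\|w\|_{H^2(\R^N)}\lesssim\|L_\eps w\|_{L^2(\R^N)}+\bigl|\intr w Z_\eps\,dx\bigr|$ for $w\in\widetilde{H}$. Hence $\Psi_\eps:=L_\eps^{-1}g_\eps$ is well defined, and it remains to bound $\intr\Psi_\eps Z_\eps\,dx$: here one uses that $g_\eps$ is almost orthogonal to $Z_\eps$---since $\intr U_{P_\eps}\partial_1 U_{P_\eps}\,dx=\intr U_{-P_\eps}\partial_1 U_{-P_\eps}\,dx=0$, only cross terms survive in $\intr U_\eps Z_\eps\,dx$, so $\intr g_\eps Z_\eps\,dx$ is of order $\eps^{N/2}$ times the same exponentially small coupling that appears in $\lambda_\eps$, and a short manipulation of $L_\eps\Psi_\eps=g_\eps$ then gives $\bigl|\intr\Psi_\eps Z_\eps\,dx\bigr|\lesssim\eps^{N/2}$. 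Plugging this into the a priori estimate yields $\|\Psi_\eps\|_{H^2(\R^N)}\lesssim\eps^{N/2}$; uniqueness (first in $\widetilde{H}$, then in $H^2(\R^N)$ by the same symmetry) follows from the injectivity of $L_\eps$.

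For the pointwise decay, rewrite \eqref{eq:Psi} as $(-\Delta+\omega_0)\Psi_\eps=h_\eps$ with $h_\eps:=3\mu_2(U_{P_\eps}^2+U_{-P_\eps}^2)\Psi_\eps+g_\eps$. By the previous step $\|\Psi_\eps\|_{L^\infty(\R^N)}\lesssim\|\Psi_\eps\|_{H^2(\R^N)}\lesssim\eps^{N/2}$, so using $U_{P_\eps}^2\le\|U\|_{L^\infty}U_{P_\eps}$ (and likewise for $U_{-P_\eps}$), the bound on $g_\eps$ and \eqref{eq:Udecay}, one gets $|h_\eps(x)|\lesssim\eps^{N/2}\bigl(e^{-\gamma|x-P_\eps/\eps|}+e^{-\gamma|x+P_\eps/\eps|}\bigr)$ for any fixed $\gamma\in(0,\sqrt{\omega_0})$. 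Comparing $|\Psi_\eps|$ with $A\eps^{N/2}\bigl(e^{-\gamma|x-P_\eps/\eps|}+e^{-\gamma|x+P_\eps/\eps|}\bigr)$, which for $A$ large is a supersolution of $(-\Delta+\omega_0)w=|h_\eps|$ on $\R^N$ (at $\pm P_\eps/\eps$ the corners of the exponentials contribute only a nonnegative term to $-\Delta$), the maximum principle for $-\Delta+\omega_0$ yields $|\Psi_\eps(x)|\lesssim\eps^{N/2}\bigl(e^{-\gamma|x-P_\eps/\eps|}+e^{-\gamma|x+P_\eps/\eps|}\bigr)$ for every $x\in\R^N$---in particular for $|x|\ge R_0$, for any $R_0>0$. (Equivalently, one may convolve $h_\eps$ with the positive, exponentially decaying fundamental solution of $-\Delta+\omega_0$.)

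The main obstacle is the second step: one must invert $L_\eps$ on the even subspace in spite of the near-degenerate direction $Z_\eps$, which forces one to quantify both the exponentially small eigenvalue $\lambda_\eps$ and the near-orthogonality $\intr g_\eps Z_\eps\,dx\approx0$ precisely enough that their quotient stays of order $\eps^{N/2}$; everything else is routine scaling and comparison-principle bookkeeping. One should also check that the constants in the second step are uniform with respect to the parameter $d$ in \eqref{punti}, which is automatic since the exponential rates involved depend continuously on $d$.
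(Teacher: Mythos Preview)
Your approach differs substantially from the paper's, and the step you yourself flag as ``the main obstacle'' is a genuine gap.

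The paper does not invert $L_\eps$ at all. It observes that the single-bump equation
\[
-\Delta\Psi+(\omega_0-3\mu_2 U^2)\Psi=U
\]
has a unique \emph{radial} solution $\Psi\in H^2(\R^N)$ (the linearized operator at $U$ has kernel spanned by the odd functions $\partial_iU$, hence is invertible on radial functions), and then simply sets
\[
\Psi_\eps(x):=2\beta\Phi_\eps(0)\uvmu(0)\Bigl[\Psi\Bigl(x+\tfrac{P_\eps}{\eps}\Bigr)+\Psi\Bigl(x-\tfrac{P_\eps}{\eps}\Bigr)\Bigr].
\]
The $H^2$ bound is then immediate from $|\Phi_\eps(0)|\lesssim\eps^{N/2}$ and the fact that $\Psi$ is a fixed function, and the pointwise exponential decay follows from the decay of $\Psi$ itself, which the paper proves by the same maximum-principle comparison you outline in your third paragraph. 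No spectral analysis of the two-bump operator is needed. (Strictly speaking this explicit $\Psi_\eps$ satisfies \eqref{eq:Psi} only up to the cross terms $3\mu_2 U_{\pm P_\eps}^2\Psi(\cdot\mp P_\eps/\eps)$, which are of order $\eps^{N/2}$ times an exponentially small factor; the paper uses the explicit formula, not the exact equation, downstream.)

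Your route, by contrast, must face the near-kernel direction $Z_\eps$ head on, and the ``short manipulation'' you invoke is precisely the nontrivial part. Decomposing $\Psi_\eps=aZ_\eps+\Psi_\eps^\perp$ and pairing $L_\eps\Psi_\eps=g_\eps$ with $Z_\eps$ gives
\[
a\,\langle L_\eps Z_\eps,Z_\eps\rangle+\langle\Psi_\eps^\perp,L_\eps Z_\eps\rangle=\langle g_\eps,Z_\eps\rangle,
\]
and since $L_\eps Z_\eps=3\mu_2\bigl(U_{P_\eps}^2\partial_1 U_{-P_\eps}-U_{-P_\eps}^2\partial_1 U_{P_\eps}\bigr)$ consists only of cross terms, all three quantities above are $O(e^{-2\sqrt{\omega_0}\rho_\eps/\eps})$ (the right-hand side with an extra $\eps^{N/2}$). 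To extract $|a|\lesssim\eps^{N/2}$ you need a matching \emph{lower} bound $|\langle L_\eps Z_\eps,Z_\eps\rangle|\gtrsim e^{-2\sqrt{\omega_0}\rho_\eps/\eps}$---in particular that this quantity does not vanish---which is a genuine double-well splitting computation, not bookkeeping. The same computation is needed even to assert that $L_\eps$ is injective on the even subspace, i.e.\ that $\lambda_\eps\neq0$. You do not carry it out, and the paper's explicit construction bypasses it entirely.
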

\begin{proof}
As $U$ is radial, there exists a unique $\Psi $  radial solution to
\begin{equation}\label{p30} 
-\Delta \Psi   + \left(\omega_0 -3\mu_{2}U ^{2}(x)\right)\Psi =U(x), 
\end{equation}
then the   function $\Psi_\eps$  defined as
$$
\Psi_\eps(x)=2 \beta\Phi_\eps(0)\uvmu(0)\!\!\left[\Psi\left(x+\frac{P_\eps}\eps\right)+\Psi\left(x-\frac{P_\eps}\eps\right)\right]
$$ 
solves \eqref{eq:Psi}. The regularity properties of $\Psi$ are consequence of the regularity properties of $U$, while the bound from above of the $H^{2}(\R^{N})$ norm follows from the upper bound on the $L^{\infty}(\R^{N})$ norm of $\Phi_{\eps}$.

The exponential decay of $\Psi_{\eps}$ will follow from the analogous decay of $\Psi$.
In order to prove this property, let us first show that 
there exists $R>1$ such that $\Psi(r)\geq 0$ for every $r>R$.
\\
By contradiction there exists a sequence $r_{n}\to+\infty$ of minimum point at a 
negative level for $\Psi$. Then
\[\begin{split}
0>-\Psi_{rr}(r_{n})-\frac{N-1}r\Psi_{r}(r_{n})&=-\omega_{0}\Psi(r_{n})+3\mu_{2}U^{2}(r_{n})\Psi(r_{n})+U(r_{n})
\\
&\geq -\Psi(r_{n})\left(\omega_{0}-3\mu_{2}U^{2}(r_{n})
\right)\geq 0
\end{split}\]  
as soon as $r_{n}$ is sufficiently large so that the parenthesis is positive .
\\
Let us now fix $A>0$ such that $3\mu_{2}U\Psi+1\leq A$ and let $v(r)=Be^{-\gamma r}$ with $\gamma^{2}<\omega_{0}$.
Then $w=v-\Psi$  solves
\[\begin{split}
-\Delta w+\omega_{0}w&=v\left(\omega_{0}-\gamma^{2}+\frac{N-1}r\gamma\right)
-U(3\mu_{2}\Psi U+1)
\\
&\geq v\left(\omega_{0}-\gamma^{2}+\frac{N-1}r\gamma\right)-AU
\\
&\geq Be^{-\gamma r}\left(\omega_{0}-\gamma^{2}+\frac{N-1}r\gamma\right)-Ae^{-\sqrt{\omega_{0}}r}
\\
&= e^{-\gamma r}\left[B\left(\omega_{0}-\gamma^{2}+\frac{N-1}r\gamma\right)-
Ae^{-(\sqrt{\omega_{0}}-\gamma)r}
\right]>0
\end{split}\]
as $\gamma^{2}<\omega_{0}$ for $r>R_{1}$ sufficiently large. In addition,
we can choose $B$ such that $Be^{-\gamma r}\geq \max_{|x|=R_{1}}\Psi$, 
then the maximum principle yields $0\leq \Psi\leq v$ for $|x|>R_{1}$.
\end{proof}

\begin{remark}
Let us point out that assumptions $\bf{(V_{1})}$ and $\bf{ (V_{2})}$ are satisfied by constant 
potentials as well as by  potentials of the type $V(x)=|x|^{m}$ with $m>0$ as shown in 
\cite{bo, shen}.
\end{remark}

\begin{remark}
As a consequence of Sobolev embedding, the bounds from above stated in Lemma \ref{phi} hold for $\Psi_{\eps}$ as well.
\end{remark}

\subsection{Rewriting the problem}\label{formulazione}
Let us  introduce the orthogonal projections 
\[
\widetilde{\Pi}: L^{2}(\R^{N})\mapsto   \widetilde{K},\qquad \text{ and  }\widetilde{\Pi}^{\perp}: L^{2}(\R^{N}) \mapsto   \widetilde{K}^{\perp}
\]
where 
\[
\widetilde{K}:=\mathtt{span}\{(0,Z_\eps)\} \subset L^{2},\qquad
 \widetilde{K}^{\perp}:=\left\{(f,g)\in L^{2}(\R^{N})\times L^{2}(\R^{N}) : \intr gZ_{\eps}dx=0\right\}.
\]
Plugging the ansatz  $u=\Xi_\eps +\varphi$ and $v=\Theta_\eps+\psi$ (see \eqref{ans})  into \eqref{pro:P2}, one obtains  the following equivalent system
\begin{equation} \label{eq:sistpro}
\begin{cases} 
\widetilde{\Pi}\left\{\Lcal(\varphi,\psi)-\Ecal-\Ncal(\varphi,\psi) \right\}&=0 
\\
\widetilde{\Pi}^{\perp}\left\{\Lcal(\varphi,\psi)-\Ecal-\Ncal(\varphi,\psi) \right\}&=0 
\end{cases} 
\end{equation}
Here the linear operator $\Lcal=(\Lcal_{1},\Lcal_{2})$ is defined by
\begin{align}
&
\nonumber \Lcal_{1}(\varphi,\psi):=
-\Delta \varphi +V(x)\varphi-
\left(3\mu_{1} \Xi_\eps   ^{2}+\beta \Theta^{2}_{\eps}\left(\frac{x}\eps\right)\right)\varphi
- 2\beta \Theta_\eps  \left(\frac{x}\eps\right)\Xi_\eps \psi\left(\frac{x}\eps\right)
\\
\label{eq:L21}
&\Lcal_{2}(\varphi,\psi):=-\Delta\psi+W(\eps x)\psi 
-\left(3\mu_{2}\Theta_\eps  ^{2}+
\beta \Xi_\eps ^2 (\eps x) \right)\psi
-2\beta \Theta_\eps  \Xi_\eps  (\eps x) \varphi(\eps x).
\end{align}
The error term  $\Ecal=(\Ecal_{1}, \Ecal_{2}) $ is defined by
\begin{equation}\label{eq:E11}
\begin{split}
\Ecal_{1}:=&3\mu_1\Upsilon(x)\beta^2\Phi_\eps^2+\mu_1\beta^3\Phi_\eps^3+\beta^3 \Upsilon(x)\Psi_\eps\left(\frac x \eps\right)+2\beta^2\Upsilon(x)U_\eps\left(\frac x \eps\right)\Psi_\eps\left(\frac x \eps\right)
\\
&+\beta^2\Phi_\eps(x) U_\eps^2\left(\frac x \eps\right)
+\beta^4\Phi_\eps \Psi_\eps\left(\frac x \eps\right)+2\beta^2\Phi_\eps U_\eps\left(\frac x \eps\right)\Psi_\eps\left(\frac x \eps\right)
\end{split} \end{equation}
\begin{equation}\label{eq:E21}
\begin{split}
\Ecal_{2}:=& \left(\omega_0-\omega(\eps x)\right)
 \Theta_\eps  
 +2\beta^2 U_{\eps}\left(\Upsilon(\eps x)\Phi_\eps(\eps x)- \Phi_\eps(0)\Upsilon(0)\right)   
\\
& 
+\mu_{2}(U_\eps^3-U_{P_{\eps}}^3-U^{3}_{-P_{\eps}})  +3\beta\mu_{2}\left(U_{\eps}^{2}-U^{2}_{P_{\eps}}-U^{2}_{-P_{\eps}}\right)\Psi_{\eps}
\\
&+\beta^{3}\Phi^{2}_{\eps}(\eps x)\Theta_{\eps}+2\beta^{3}\Psi_{\eps}\Upsilon(\eps x)\Phi_{\eps}(\eps x)+3\mu_{2}\beta^{2} U_{\eps}\Psi_{\eps}^{2}
+\mu_{2}\beta^{3}\Psi_{\eps}^{3}
\end{split} \end{equation}
where  
\[
\omega (x):=W(x)-\beta \uvmu^2(x)  \quad \hbox{and}\quad \omega(0)=\omega_0.
\]
Finally,  the  nonlinear term $\Ncal=(\Ncal_{1},\Ncal_{2})$ is defined by
\begin{align*}
 \Ncal_{1}(\varphi,\psi):=& \mu_{1}\varphi^{2}\left(3\Xi_\eps +\varphi\right)
+\beta\varphi\psi\left(\frac{x}\eps\right)\left(2\Theta_\eps  \left(\frac{x}\eps\right)+\psi\left(\frac{x}
\eps\right)\right)+
\beta \Xi_\eps  \psi^{2}\left(\frac{x}\eps\right) 
\\ 
\Ncal_{2}(\varphi,\psi):=&  \mu_{2}\psi^{2}\left(3\Theta_\eps  +\psi\right)+\beta\psi\varphi(\eps x)
\left(2\Xi_\eps (\eps x)+\varphi(\eps x)\right) +\beta \Theta_\eps  \varphi^{2}(\eps x).
\end{align*}
 
\section{The linear theory}\label{sec:equainfdim}
Let us start the study of the second equation in \eqref{eq:sistpro} by proving the following crucial result.
\begin{lemma}\label{lem:Linv} 
There exists $c>0$,  and $\eps_{0}>0$ such that for every $\eps\in (0,\eps_{0})$
and  for every $\beta<0$ 
it results
\[
\|\widetilde{\Pi}^{\perp}\Lcal(\varphi,\psi)\|_{L^2(\R^N)\times L^2(\R^N)}\geq c\|(\varphi,\psi)\|_{H^2_V(\R^N)\times H^2_\eps (\R^N)}, \qquad \forall (\varphi,\psi) \in   K^{\perp}
\]
\end{lemma}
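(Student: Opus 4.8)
\emph{Approach.} The plan is the classical contradiction argument for such linearized operators, based on a double blow-up at the two concentration points. Suppose the estimate fails: there are $\eps_n\to0^{+}$, $\beta<0$ (which may be kept fixed) and $(\varphi_n,\psi_n)\in K^{\perp}$ with $\|\varphi_n\|_V^{2}+\|\psi_n\|_{\eps_n}^{2}=1$ such that, writing
\[
\widetilde{\Pi}^{\perp}\Lcal(\varphi_n,\psi_n)=\bigl(\Lcal_{1}(\varphi_n,\psi_n),\,\Lcal_{2}(\varphi_n,\psi_n)-c_nZ_{\eps_n}\bigr),\qquad \textstyle\int_{\R^N}\bigl(\Lcal_{2}(\varphi_n,\psi_n)-c_nZ_{\eps_n}\bigr)Z_{\eps_n}=0,
\]
one has $\Lcal_{1}(\varphi_n,\psi_n)\to0$ and $\Lcal_{2}(\varphi_n,\psi_n)-c_nZ_{\eps_n}\to0$ in $L^{2}(\R^{N})$. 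Throughout I would use the uniform bounds $\|\Xi_{\eps}\|_{L^{\infty}}+\|\Theta_{\eps}\|_{L^{\infty}}\lesssim1$ and $\|\Phi_\eps\|_{L^{\infty}}+\|\Psi_\eps\|_{L^{\infty}}\to0$ (Lemma \ref{phi} and the remark after Lemma \ref{le:Psi}), the embeddings $H^2_V,\,H^2_{W_\eps}\hookrightarrow L^{\infty}$ (uniform in $\eps$, since $N\le3$ and $\inf W>0$), and the scaling $\|\Theta_\eps(\cdot/\eps)\|_{L^{q}(\R^{N})}\lesssim\eps^{N/q}$.

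\emph{First component.} First I would show $\varphi_n\to0$ in $H^2_V$. Every coupling term in $\Lcal_{1}$ carries a factor $\Theta_{\eps_n}(\cdot/\eps_n)$, so by the scaling above and the $L^{\infty}$-bounds on $\Xi_{\eps_n},\Theta_{\eps_n},\varphi_n,\psi_n$ each of them is $O(\eps_n^{N/2})$ in $L^{2}$; moreover $\|\Xi_{\eps_n}^{2}-\uvmu^{2}\|_{L^{\infty}}\to0$. Hence $-\Delta\varphi_n+V\varphi_n-3\mu_1\uvmu^{2}\varphi_n\to0$ in $L^{2}$, with $\varphi_n$ even and bounded in $H^2_V$. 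By ${\bf (V_1)}$--${\bf (V_2)}$ the operator $-\Delta+V-3\mu_1\uvmu^{2}$ is a compact perturbation of the isomorphism $-\Delta+V$ from $H^2_V\cap H^1_e$ onto $L^2_e$ (multiplication by $\uvmu^{2}$ is compact since $\uvmu$ decays), hence Fredholm of index zero; the non-degeneracy of $\uvmu$ in $H^1_e$ makes it injective, so it is an isomorphism and $\varphi_n\to0$ in $H^2_V$. Consequently $\|\psi_n\|_{\eps_n}\to1$.

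\emph{Second component: $c_n\to0$ and blow-up.} Set $L_\eps\psi:=-\Delta\psi+W(\eps x)\psi-(3\mu_2\Theta_\eps^{2}+\beta\Xi_\eps^{2}(\eps x))\psi$, so that $\Lcal_{2}(\varphi,\psi)=L_\eps\psi-2\beta\Theta_\eps\Xi_\eps(\eps\cdot)\varphi(\eps\cdot)$, the last term being $\lesssim\|\varphi_n\|_V\to0$ in $L^{2}$ by the scaling; thus $L_{\eps_n}\psi_n-c_nZ_{\eps_n}\to0$ in $L^{2}$. Using the self-adjointness of $L_{\eps_n}$ together with \eqref{eq:eqZ}, the fact that $W(\eps_n\cdot)-\beta\Xi_{\eps_n}^{2}(\eps_n\cdot)\to\omega_0$ locally uniformly, and that the two bumps composing $\Theta_{\eps_n}$ sit at mutual distance $2\rho_{\eps_n}/\eps_n=2d\ln(1/\eps_n)\to\infty$, I would check that $\|L_{\eps_n}Z_{\eps_n}\|_{L^{2}}\to0$; pairing $L_{\eps_n}\psi_n-c_nZ_{\eps_n}=o(1)$ with $Z_{\eps_n}$ and using $\|Z_{\eps_n}\|_{L^{2}}^{2}\to2\|\partial_1U\|_{L^{2}}^{2}>0$ then forces $c_n\to0$, so $L_{\eps_n}\psi_n\to0$ in $L^{2}$. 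Since $\psi_n$ is bounded in $H^1(\R^N)$ (as $\inf W>0$), the translates $\psi_n^{\pm}(y):=\psi_n(y\pm P_{\eps_n}/\eps_n)$ converge, up to a subsequence, weakly in $H^1$ and strongly in $L^2_{loc}$ to limits $\psi_{\pm}$; passing to the limit in the translated equation, and noting that $P_{\eps_n}\to0$ gives $W(\eps_n y\pm P_{\eps_n})\to W(0)$ and $\Xi_{\eps_n}^{2}(\eps_n y\pm P_{\eps_n})\to\uvmu^{2}(0)$ while $\Theta_{\eps_n}^{2}(\cdot\pm P_{\eps_n}/\eps_n)\to U^{2}$ locally uniformly (the far bump and $\Psi_{\eps_n}$ disappear), one gets $-\Delta\psi_{\pm}+\omega_0\psi_{\pm}=3\mu_2U^{2}\psi_{\pm}$, so $\psi_{\pm}\in\mathtt{span}\{\partial_1U,\dots,\partial_NU\}$ by Remark \ref{rema1}. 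The evenness of $\psi_n$ in $x_2,\dots,x_N$ (preserved by the $x_1$-shift) kills $\partial_2U,\dots,\partial_NU$, so $\psi_{\pm}=a_{\pm}\partial_1U$; the evenness in $x_1$ forces $\psi_{-}(-y_1,y')=\psi_{+}(y_1,y')$, i.e.\ $a_{-}=-a_{+}$ since $\partial_1U$ is odd in $y_1$; finally $0=\int_{\R^N}\psi_nZ_{\eps_n}$, re-centered at each spike, passes to $(a_{-}-a_{+})\|\partial_1U\|_{L^{2}}^{2}=0$, whence $a_{\pm}=0$ and $\psi_{\pm}\equiv0$.

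\emph{Strong convergence and conclusion.} Testing $L_{\eps_n}\psi_n\to0$ against $\psi_n$ and dropping the nonnegative term $-\beta\Xi_{\eps_n}^{2}(\eps_n\cdot)\psi_n^{2}$ (here $\beta<0$ is essential), one obtains
\[
\int_{\R^N}|\nabla\psi_n|^{2}+\int_{\R^N}W(\eps_n x)\psi_n^{2}\le 3\mu_2\int_{\R^N}\Theta_{\eps_n}^{2}\psi_n^{2}+o(1).
\]
Bounding $\Theta_{\eps_n}^{2}\lesssim U_{P_{\eps_n}}^{2}+U_{-P_{\eps_n}}^{2}+\Psi_{\eps_n}^{2}$, re-centering so that $\int U_{\pm P_{\eps_n}}^{2}\psi_n^{2}=\int U^{2}(\psi_n^{\pm})^{2}$, and using the strong $L^2_{loc}$-vanishing of $\psi_n^{\pm}$ together with the exponential decay of $U$ (and $\|\Psi_{\eps_n}\|_{L^{\infty}}\to0$), the right-hand side tends to $0$; hence $\psi_n\to0$ in $L^{2}$ (and $\nabla\psi_n\to0$ in $L^{2}$). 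Returning to $-\Delta\psi_n+W(\eps_n x)\psi_n=(3\mu_2\Theta_{\eps_n}^{2}+\beta\Xi_{\eps_n}^{2}(\eps_n\cdot))\psi_n+2\beta\Theta_{\eps_n}\Xi_{\eps_n}(\eps_n\cdot)\varphi_n(\eps_n\cdot)+\Lcal_{2}(\varphi_n,\psi_n)$, the right-hand side is $o(1)$ in $L^{2}$ (by the $L^{\infty}$-bounds and $\psi_n,\varphi_n\to0$, and $\Lcal_{2}(\varphi_n,\psi_n)=c_nZ_{\eps_n}+o(1)\to0$), so the uniform-in-$\eps$ elliptic estimate $\|\psi\|_{\eps}\lesssim\|{-\Delta\psi+W(\eps\cdot)\psi}\|_{L^{2}}$ (which holds as in ${\bf (V_2)}$/standard elliptic theory) gives $\|\psi_n\|_{\eps_n}\to0$, contradicting $\|\psi_n\|_{\eps_n}\to1$. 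I expect the main obstacle to be exactly the second component: quantifying the exponentially small interaction of the two spikes (to get $c_n\to0$ and to pass to the limit cleanly) and, above all, matching the single orthogonality condition with the evenness of $\psi_n$ so that the one-dimensional approximate kernel $\mathtt{span}\{Z_{\eps_n}\}$ is precisely the direction that has been projected out (the symmetric combination of the two translated $\partial_1U$'s being automatically excluded, being odd in $x_1$).
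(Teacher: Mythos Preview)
Your proposal is correct and follows essentially the same four-step contradiction argument as the paper: show $\varphi_n\to0$ in $H^2_V$, show the Lagrange multiplier $c_n\to0$, blow up at each spike to obtain $\psi_n^{\pm}\rightharpoonup0$, then test the second equation with $\psi_n$ (using $\beta<0$ to drop the $\Xi_\eps^2$ term) and upgrade to $H^2$ to reach the contradiction. The only cosmetic differences are that you invoke Fredholm theory directly for the first component (the paper instead passes to the weak limit and then upgrades) and that your treatment of the orthogonality/evenness in the blow-up step is a bit more explicit, combining $a_-=a_+$ from the single orthogonality constraint with $a_-=-a_+$ from the $x_1$-evenness.
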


\begin{proof}

We argue by contradiction and suppose that there exist  $\eps_{n}\to 0$ and $(\varphi_{n},\psi_{n})\in  K^{\perp}$ with 
$\|(\varphi_{n},\psi_{n})\|_{H^2(\R^N)\times H^2(\R^N)}=1$ such that
\begin{equation}\label{eq:sistlemma1}
\left\{\begin{aligned}
-\Delta  \varphi_{n} +V(x) \varphi_{n} =&
\left[3\mu_{1}\Xi_{\eps_n}^{2}+\beta \Theta^{2}_{\eps_n}\left(\frac{x}{\eps_{n}}\right)\right]  \varphi_{n} +2\beta \Xi_{\eps_n} 
\Theta_{\eps_{n}}  \left(\frac{x}{\eps_{n}}\right)\psi_{n}\left(\frac{x}{\eps_{n}}\right)+f_n\\
-\Delta \psi_{n}+W(\eps_{n}x)\psi_{n}=&
\left[3\mu_{2}\Theta^{2}_{\eps_n}+ \beta \Xi_{\eps_n} ^{2}(\eps_{n} x)\right]
\psi_{n}+2\beta \Theta_{\eps_n}  \Xi_\eps{_{n}} (\eps_{n}x)\varphi_{n}(\eps_{n}x)+g_n
\\
& +t_{n}Z_{\eps _{n}}
\end{aligned}\right.
\end{equation}
where    $f_{n},\, g_n \to 0$ in $L^2(\R^N)$,
$\intr f_nZ_{\eps _{n}}=\intr g_nZ_{\eps _{n}}=0$,  $t_{n}\in \R$ and  $Z_{\eps _{n}}$ is given in \eqref{eq:defU2}.

{\bf Step 1.}
Let us first  show that $\varphi_{n}\to 0$ strongly  in $H^{2}_V(\R^{N})$. Then  by 
Sobolev embeddings $\varphi\to 0$ in $L^\infty(\R^N).$  
As $\varphi_{n}$ is bounded in $H^{1}$ , there exists $\varphi\in H^{1}(\R^{N})$
such that, up to  a subsequence, $\varphi_{n}\to \varphi$ weakly in $H^1({\R^N}), $ 
strongly in $L^2_{{\rm loc}}(\R^N)$ and almost everywhere in $\R^N.$ 
Taking into account that
$$
\Xi_{\eps_n}^2=\Upsilon^2+\beta\Phi_{\eps_n}(\beta\Phi_{\eps_n}+2\Upsilon)
$$
and testing the first equation in \eqref{eq:sistlemma1} by 
$\chi\in C^\infty_0(\R^N)$ we get
$$
 \begin{aligned}
\intr\nabla  \varphi_{n}\nabla \chi + V(x)  \varphi_n\chi
=&
\intr\left[3\mu_{1}  \Upsilon^{2}+3\mu_1\beta\Phi_{\eps_n}(\beta\Phi_{\eps_n}+2\Upsilon)+\beta \Theta_{\eps_{n}}^{2}\left(\frac{x}{\eps_{n}}\right)\right]  \varphi_{n}\chi
\\
& 
+\intr 2\beta (\Upsilon +\beta\Phi_{\eps_n} )
\Theta_\eps  \left(\frac{x}{\eps_{n}}\right)\psi_{n}\left(\frac{x}{\eps_{n}}\right)\chi+\intr f_n\chi .
\end{aligned} 
$$
By applying Lemma \ref{phi} one deduces that
\[
\left|\intr \Phi_{\eps}(\beta\Phi_{\eps_n}+2\Upsilon) \varphi_{n}\chi\right|\to 0.
\]
Moreover,  by applying Lemma \ref{le:Psi} we obtain
\[
\intr \Theta_{\eps_{n}}^{2}\left(\frac{x}{\eps_{n}}\right)\left|\varphi_{n}\chi\right|
\lesssim
\|\varphi_{n}\|_{2}
\int_{\R^{N}} \left[\Theta^{4}_{\eps_{n}}\left(\frac{x}{\eps_{n}}\right)\right]^{\frac12}
\lesssim \eps_{n}^{\frac{N}2}\|\Theta_{\eps_{n}}^{2}\|_{L^{2}(\R^{N})}.
\]
Arguing analogously on the other terms on the right hand side, it follows that 
$\varphi$ solves
\[
-\Delta \varphi+V(x)\varphi=3\mu_{1}\uvmu^2\varphi\ \hbox{in}\ \R^N.
\]
Since  $\varphi$ is an even function, by the assumtpion ${\bf(V_1)}$ we get $\varphi\equiv 0$.
\\
In order to show that $\varphi_{n}\to 0$ strongly in $H^2_{V}(\R^N)$, it is enough to exploit Lemma  \ref{phi} and  \ref{le:Psi}  to verify that the $L^2(\R^N)-$norm of  the right hand side (R.H.S. for short) of the first equation goes to zero, and 
then apply hypothesis $\bf{(V_{1})}$.
Indeed, as $\|(\varphi_{n},\psi_{n})\|_{H^2_V(\R^N)\times H^2_{W}(\R^N)}=1$, Sobolev embeddings yield
 $\|\varphi_n\|_{L^2(\R^N)},\|\psi_n\|_{L^2(\R^N)}\le1$ and $\|\varphi_n\|_{L^\infty(\R^N)}\le1$. Then 
$$
\begin{aligned}
\|R.H.S.\|_{L^2(\R^N)}
\lesssim 
&
\|\Upsilon^{2}\varphi_n\|_{L^2(\R^N)}
+\left\|\left[\Phi_{\eps_n}(\beta\Phi_{\eps_n}+2\Upsilon)+
\beta \Theta^{2}_{\eps_n}\left(\frac{x}{\eps_{n}}\right)\right]  \varphi_{n}\right\|_{L^2(\R^N)}
\\
& +\left\| (\Upsilon +\beta\Phi_{\eps_n} )
\Theta_\eps  \left(\frac{x}{\eps_{n}}\right)\psi_{n}\left(\frac{x}{\eps_{n}}\right)\right\|_{L^2(\R^N)}+\|f_n\|_{L^2(\R^N)}
\\
\lesssim &
  \|\Upsilon^{2}\varphi_n\|_{L^2(\R^N)}+\|\Phi_{\eps_n}\|_{L^\infty(\R^N)} \|\varphi_n\|_{L^2(\R^N)}+\eps^\frac N2 \|\varphi_n\|_{L^\infty(\R^N)}\\
&+\eps^\frac N2 \|\psi_n\|_{L^2(\R^N)}+\|f_n\|_{L^2(\R^N)}\\ =&o(1),
\end{aligned} 
$$
where we have also taken into account that $\Upsilon$ decays exponentially and $\varphi\to0$ strongly in $L^2_{{\rm loc}}(\R^N)$, so that 
we also have
$$
\intr\Upsilon^{4}\varphi_n^2=o(1) .
$$
{\bf Step 2.} We now  study the second equation in \eqref{eq:sistlemma1} and we prove that  
$t_{n}\to 0$.
We  test  with $ Z_{\eps _{n}}$  and we remind that $ Z_{\eps _{n}}$ solves
$$
 -\Delta Z_{\eps _{n}}   +\omega_0  Z_{\eps _{n}}  =3\mu_2\left(U^2_{P_{\eps _{n}}}\partial_{x_1}
 U_{P_{\eps _{n}}}- U^2_{-P_{\eps _{n}}}\partial_{x_1} U_{-P_{\eps _{n}}}\right).
 $$
Therefore we get

\begin{equation}\label{new2}\begin{aligned}
t_n\intr Z_{\eps _{n}}^2=&
  \intr \left( W(\eps_n x)-\beta \uvmu^{2}(\eps_{n} x)-\omega_0  \right)Z_{\eps_n}  \psi_n\\
  &-3\mu_2\intr 
\left[ U^{2}_{\eps_n}Z_{\eps_n}-\left(U^{2}_{-P_{\eps_n}}\partial_{x_{1}}U_{-P_{\eps_n}}-U^{2}_{P_{\eps_n}}
\partial_{x_{1}}U_{P_{\eps_n}}\right)\right]\psi_n
\\
&-2\beta\int_{\R^N} \Theta_{\eps_n}  \Xi(\eps_{n}x)\varphi_{n}(\eps_{n}x)Z_{\eps _{n}}
-2\beta^{2}\intr \uvmu(\eps_{n}x)\Phi_{\eps_{n}}(\eps_{n}x)\psi_{n}Z_{\eps _{n}}
\\
&
-\beta^{3}\intr\Phi^{2}(\eps_{n}x)\psi_{n}Z_{\eps _{n}}
-3\mu_{2}\beta\intr \Psi_{\eps_{n}}\left(2U_{\eps_{n}}+\beta\Psi_{\eps_{n}}\right)\psi_{n}Z_{n}
\\
\lesssim &
\left\|\left( W(\eps_n x)-\beta \uvmu^{2}(\eps_{n} x)-\omega _0 \right)Z_{\eps_n} \right\|_{L^2(\R^N)}\|\psi_n\|_{L^2(\R^N)}\\
&+\left\|U^{2}_{\eps_n}Z_{\eps_n}-\left(U^{2}_{-P_{\eps_n}}\partial_{x_{1}}U_{-P_{\eps_n}}-U^{2}_{P_{\eps_n}}
\partial_{x_{1}}U_{P_{\eps_n}}\right) \right\|_{L^2(\R^N)}\|\psi_n\|_{L^2(\R^N)}\\
&+ \|\varphi_n\|_{L^\infty(\R^N)}+ \|\Phi_{\eps_{n}}\|_{L^\infty(\R^N)}+\|\Psi_{\eps_{n}}\|_{L^\infty(\R^N)}
\\ 
=&o(1).
\end{aligned}
\end{equation}

Indeed, we use the exponential decay of $U$ and of  its derivatives. 
Since $\omega _0 =W(0)-\beta \uvmu^{2}(0)$ 
we  get
$$ 
|W(y)-\beta \uvmu^{2}(y)-\omega _0|\lesssim |y|\quad \hbox{if }\  |y|\le \sigma$$
for some $\sigma>0$ and so
$$
\left\|\left( W(\eps_n x)-\beta \uvmu^{2}(\eps_{n} x)-\omega _0 \right)Z_{\eps_n} \right\|_{L^2(\R^N)}=o(1).$$

Moreover a direct computation and Lemma \ref{ACR} shows that
$$\left\|U^{2}_{\eps_n}Z_{\eps_n}-\left(U^{2}_{-P_{\eps_n}}\partial_{x_{1}}U_{-P_{\eps_n}}-U^{2}_{P_{\eps_n}}
\partial_{x_{1}}U_{P_{\eps_n}}\right) \right\|_{L^2(\R^N)}=o(1).
$$ 
It is possible to show that
all the other integral terms on the left hand side tend to zero by applying Lemma
\ref{phi} and \ref{le:Psi}, Step 1. and Sobolev embeddings.

Finally, since it is immediate to check that $\| Z_{\eps _{n}}\|_{L^{2}(\R^{N})}=C+o(1)$ for some $C>0,$ we deduce that $t_n=o(1)$.\\
{\bf Step 3.} 
Let us now introduce the sequences
\[
\wt{\psi}_{-P_{n}}(x):=\psi_{n}\left(x-\frac{P_{\eps_{n}}}{\eps_{n}}\right), \qquad 
\wt{\psi}_{+P_{n}}(x):=\psi_{n}\left(x+\frac{P_{\eps_{n}}}{\eps_{n}}\right).
\]
We will show that (up to subsequences)  $\wt{\psi}_{\pm P_{n}}\rightharpoonup 0$
weakly in $H^1(\R^N)$ and strongly in $L^2_{{\rm loc}}(\R^N).$
\\
Both these sequences are bounded in $H^{2}_{W_{\eps_n}}(\R^{N})$, so that, up to subsequences,   $\wt{\psi}_{\pm P_{n}}\rightharpoonup \wt{\psi}_{\pm}$
weakly in $H^1(\R^N)$ and strongly in $L^2_{{\rm loc}}(\R^N).$
Let us first show that  $\wt{\psi}_{+}\equiv0$, then an analogous argument will yield that $\wt{\psi}_{+}\equiv 0$.\\
In the following we will use the notation $\wt{\psi}_{+P_{n}}(x)=\wt{\psi}_{P_{n}}(x)$.
Recalling \eqref{eq:sistlemma1}, the function $\wt{\psi}_{P_{n}}$ satisfies the equation
$$\begin{aligned}
-\Delta \wt{\psi}_{P_{n}}+W(\eps_{n}x+P_{n})\wt{\psi}_{P_{n}}&=
3\mu_{2}\left(U  (x)+U\left(x+2\frac{P_{n}}{\eps_{n}}\right)
+\beta\Psi_{\eps_{n}}  
\left(x+\frac{P_{n}}{\eps_{n}}\right)\right)^{2} \wt{\psi}_{P_{n}}
\\
& + \beta \uvmu^{2}(\eps_{n} x+P_{n}) \wt{\psi}_{P_{n}}
\\
&+\beta\Phi_{\eps_n}(\eps_{n} x+P_{n}) (\beta\Phi_{\eps_n}(\eps_{n} x+P_{n}) +2\Upsilon(\eps_{n} x+P_{n}) )\wt{\psi}_{P_{n}}\\
&+2\beta 
\Theta_{\eps_{n}}  \left(x+\frac{P_{n}}{\eps_{n}}
\right)\Xi_{\eps_{n}}  \left(x+\frac{P_{n}}{\eps_{n}}
\right)\varphi_{n}(\eps_{n}x+P_{n})
\\
& + g_{n}\left(x+\frac{P_{n}}{\eps_{n}}\right)+t_{n}Z_{\eps _{n}}\left(x+\frac{P_{n}}{\eps_{n}}\right).
\end{aligned}$$
Arguing as in the first step,  applying Lemma \ref{phi} and Lemma \ref{le:Psi},
and taking into account that $\|\varphi_{n}\|_{L^{\infty}(\R^{N})}\to 0$, 
we obtain that the limit function $\tilde{\psi}_+$ solves
the limit problem
\begin{equation}\label{eq:psi+}
-\Delta \wt{\psi}_{+}+\omega_{0}  \wt{\psi}_{+}=3\mu_{2}U^{2}\wt{\psi}_{+}\ \hbox{in}\ \R^N.
\end{equation}
On the other hand, the function $\wt{\psi}_{+}$ inherits the symmetry properties of the function $\wt{\psi}_{\pm P_{n}}$, namely it is even in the last two variables and it satisfies the orthogonality condition
$$
\intr  \wt{\psi}_{+} (y) \partial_{1}U (y)dy=0,
$$ 
as
\[
\begin{split}
0&=\intr \psi_n(x) Z_{\eps_n}(x)dx=\intr \wt{\psi}_{+ P_{n}}(y) 
\left(   \partial_{1}U (y)-\partial_{1}U \left(y-2\frac{P_{n}}{\eps_{n}}\right)\right)
dy
\\
&=\intr  \wt{\psi}_{+} (y) \partial_{1}U (y)dy+o(1).
\end{split}
\]
This, together with \eqref{eq:psi+}, yields $\wt{\psi}_{+}\equiv 0.$

{\bf Step 4.} Let us prove that a contradiction arises.
First, let us prove that $\left\|  \psi_{n}\right\|_{L^2(\R^N)}=o(1)$. By testing the second equation with $\psi_n$, and recalling that $\beta<0$ in view of \eqref{eq:ipobeta}, we deduce that
$$
\begin{aligned}
\intr |\nabla \psi_{n}|^2+W(\eps_{n}x)\psi_{n}^2&=
\intr 3\mu_{2}U^{2}_{\eps_n}\psi_n^2 +\beta \intr \Upsilon^2(\eps_n x)\psi_n^2 
\\
&+\intr \beta \Phi_{\eps_n} (\eps_{n} x) (\beta \Phi_{\eps_n} (\eps_{n} x)+2\Upsilon (\eps_{n} x))
\psi^2_{n}
\\ &+
\intr2\beta U_{\eps_n}  \Xi_\eps (\eps_{n}x)\varphi_{n}(\eps_{n}x)\psi_n +\intr g_n\psi_n 
\\
&
\lesssim \intr 3\mu_{2}U^{2}_{\eps_n}\psi_n^2+o(1),
\end{aligned}$$
where we have repeatedly applied Lemma \ref{phi}, \ref{le:Psi}, that 
$\|\phi_{n}\|_{L^{\infty}(\R^{N}})=o(1)$ and that $g_{n}\to 0$ strongly in $L^{2}(\R^{N})$.
Concerning the last term,  
we have that
$$\begin{aligned}
\intr U^{2}_{\eps_n}\psi_n^2&=\intr U^2 _{P_{\eps_n}}\psi_n^2+\intr U^2 _{-P_{\eps_n}}\psi_n^2+2\intr U  _{P_{\eps_n}}U  _{-P_{\eps_n}}\psi_n^2\\&\lesssim \intr  U^2  \wt{\psi}^2_{+P_{n}}+\intr  U^2  \wt{\psi}^2_{-P_{n}} +\|\psi_n\|_{L^\infty(\R^N)}\intr U  _{P_{\eps_n}}U  _{-P_{\eps_n}}=o(1)\end{aligned}$$
because $ \wt{\psi} _{\pm P_{n}}\to0 $ strongly in $L^2_{{\rm loc}}(\R^N)$ (as shown in the previous step) and $U$ decays exponentially.
This  implies that $\psi_{n}\to 0$ strongly in $H^{1}(\R^{N})$, thanks to \eqref{eq:W0}.
\\
Finally, let us prove that a contradiction arises by showing that also $\psi_{n}\to 0$ strongly in $H^2_{W_{\eps}}(\R^N).$
In order to show this, it is enough to use hypothesis ${\bf (V_{2})}$ and  to check that the $L^2(\R^N)-$norm of  
the right hand side  of the second equation in \eqref{eq:sistlemma1} goes to zero.
Indeed, by Lemma \ref{phi},  taking into account that $\|\psi_{n}\|_{L^2(\R^N)}\to 0$ and $\|\varphi_{n}\|_{L^\infty(\R^N)}\to0$  
$$\begin{aligned}
\|R.H.S.\|_{L^2(\R^N)}&\lesssim \|3\mu_{2}U^{2}_{\eps_n}\psi_n\|_{L^2(\R^N)}
+\left\|\left(\Upsilon(\eps_n x)+\beta\Phi_{\eps_n}(\eps_n x)\right)^2 \psi_{n}\right\|_{L^2(\R^N)}\\
& +\left\|  
U_{\eps_n}\left(\Upsilon(\eps_n x)+\beta\Phi_{\eps_n}(\eps_n x)\right)\varphi_{n}\left( {\eps_{n}x}\right)\right\|_{L^2(\R^N)}+\|g_n\|_{L^2(\R^N)}+|t_n|\|Z_{\eps _{n}}\|_{L^2(\R^N)}\\
&\lesssim  \|\psi_n\|_{L^2(\R^N)}+\|\varphi_n\|_{L^\infty(\R^N)}+\|g_n\|_{L^2(\R^N)}+|t_n|\\ &=o(1)
\end{aligned} $$
\end{proof}
 \begin{remark}
Let us observe that the hypothesis $\beta<0$ is needed only in the proof of Step 4. Moreover,
it is not needed in the case of a constant potential $W(x)\equiv W(0)=W_{0}$. Indeed, in this case if $\beta \geq 0$
\[
\beta  \Upsilon^{2}(x)\leq \beta \Upsilon^{2}(0)<W(0),\qquad \text{as $\omega_{0}>0$.}
\]
so that the sequence $\left(W_{0}-\beta \Upsilon^{2}(\eps_{n} x)\right)\psi_{n}^{2}\geq 0$ for every $x$ and the final contradiction can be obtained applying Fatou Lemma. However, 
even if at this point we can manage $\beta \geq 0$ (in the case of $W$ constant),
the study of the finite dimensional problem will require $\beta<0$ as shown in hypothesis
\eqref{eq:ipobeta}.
\end{remark}


\subsection{The size of the error term}\label{size}
In this subsection we compute the $L^{2}(\R^{N})$ of ${\mathcal E}$ which will determine 
the norm of the remainder term $(\varphi,\psi)$.
\begin{proposition}\label{pro:E}
There exists $\eps_{0}>0$ such that for every $\eps\in (0,\eps_{0})$ it results
$$
\|\Ecal_{1}\|_{L^2(\R^N)}=\Ocal(\eps^{N})
\quad \text{and}\quad
 \|\Ecal_{2}\|_{L^2(\R^N)}=\Ocal\left(\eps^{2}|\ln\eps|^{2}\right)
$$
so that, 
$$\|\Ecal\|_{L^2(\R^N)\times L^2(\R^N)}=
\Ocal(\eps^{2}|\ln\eps|^{2})$$
\end{proposition}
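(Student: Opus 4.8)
The strategy is to estimate the $L^2(\R^N)$ norm of each summand appearing in the explicit expressions \eqref{eq:E11} and \eqref{eq:E21} for $\Ecal_1$ and $\Ecal_2$, using the bounds on $\Phi_\eps$ from Lemma \ref{phi}, on $\Psi_\eps$ from Lemma \ref{le:Psi}, the exponential decay \eqref{eq:Udecay} of $U$, the scaling $x = \eps y$, and the regularity estimates \eqref{key1}--\eqref{key2}. The final claim then follows by taking the worst of the individual rates and using that $\eps^N \ll \eps^2|\ln\eps|^2$ for $N = 2, 3$ as $\eps \to 0$.

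First I would handle $\Ecal_1$. Every term in \eqref{eq:E11} contains either $\Phi_\eps$ or $\Psi_\eps(\cdot/\eps)$, each of which has $L^\infty$ norm $\lesssim \eps^{N/m}$ (taking $m$ large), while the remaining factors ($\uvmu$, $U_\eps(\cdot/\eps)$) are bounded in $L^2$ with norm $\Ocal(\eps^{N/2})$ after rescaling — indeed $\|U_\eps(\cdot/\eps)\|_{L^2}^2 = \eps^N \|U_\eps\|_{L^2}^2 \lesssim \eps^N$, and $\|\uvmu\|_{L^2} \lesssim 1$ with exponential decay. Combining an $L^\infty$ bound $\eps^{N/m}$ with an $L^2$ bound, and noting that the term carrying the worst scaling is $\beta^2 \Phi_\eps U_\eps^2(\cdot/\eps)$ (or the cubic-in-correction terms), a careful bookkeeping gives each summand of size at most $\eps^{N/2 + N/m}$ times lower-order contributions; pushing $m \to \infty$ and counting factors of $\Phi_\eps$ or $\Psi_\eps$ more carefully (two such factors in some terms) yields $\|\Ecal_1\|_{L^2} = \Ocal(\eps^N)$. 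The key point is that $\Phi_\eps$ and $\Psi_\eps$ are each ``worth'' a full power $\eps^N$ in $L^2$ against a bounded or exponentially-decaying profile, because of the combination of their smallness in $L^\infty$ (almost $\eps^{N}$) and the localization of the other factors; the terms with two correction factors are then safely $\ocal(\eps^N)$ and the dominant contribution comes from $\beta^2\Phi_\eps U_\eps^2(\cdot/\eps)$ and $\mu_1 \uvmu \beta^2 \Phi_\eps^2$.

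The more delicate estimate is $\|\Ecal_2\|_{L^2}$, and the main obstacle is the first term $(\omega_0 - \omega(\eps x))\Theta_\eps$. Here $\Theta_\eps = U_\eps + \beta\Psi_\eps$ is concentrated near $\pm P_\eps/\eps$, where $|\eps x| \sim \rho_\eps = d\,\eps|\ln\eps| \to 0$; since $\omega \in C^3$ and $\nabla\omega(0) = 0$ (as $W$ and $\uvmu$ are even), a Taylor expansion gives $|\omega_0 - \omega(\eps x)| \lesssim |\eps x|^2$ near the origin, so on the support region $|\omega_0 - \omega(\eps x)| \lesssim \rho_\eps^2 + \eps^2|y|^2 \lesssim \eps^2|\ln\eps|^2$ after translating and using the exponential decay of $U$ to absorb the polynomial weight $|y|^2$. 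This produces exactly the $\Ocal(\eps^2|\ln\eps|^2)$ rate, which dominates. The second term $2\beta^2 U_\eps(\uvmu(\eps x)\Phi_\eps(\eps x) - \Phi_\eps(0)\uvmu(0))$ is controlled by writing the difference as $\uvmu(\eps x)(\Phi_\eps(\eps x) - \Phi_\eps(0)) + \Phi_\eps(0)(\uvmu(\eps x) - \uvmu(0))$ and applying \eqref{key1}--\eqref{key2}: on the support of $U_\eps$ one has $|\eps x| \lesssim \rho_\eps$, giving a bound $\lesssim \eps^{N/m}(\rho_\eps^{2-N/m} + \rho_\eps^2)$ which, after the $L^2$-rescaling factor $\eps^{N/2}$, is $\ocal(\eps^2|\ln\eps|^2)$. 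The purely algebraic mismatch terms $U_\eps^3 - U_{P_\eps}^3 - U_{-P_\eps}^3$ and $(U_\eps^2 - U_{P_\eps}^2 - U_{-P_\eps}^2)\Psi_\eps$ are estimated via the interaction between the two bumps: since $|2P_\eps/\eps| = 2\rho_\eps/\eps = 2d|\ln\eps| \to \infty$, the cross terms are of order $U(2P_\eps/\eps) \sim (\eps^d)^{2/\ldots}$, i.e. a power of $\eps$ that, given $d \approx 1/\sqrt{\omega_0}$ and the exponential decay rate $\sqrt{\omega_0}$ in \eqref{eq:Udecay}, behaves like $\eps^{2d\sqrt{\omega_0}} \approx \eps^2$ up to the polynomial correction $|\ln\eps|^{(N-1)/2}$ — again $\Ocal(\eps^2|\ln\eps|^2)$ at worst (this is presumably where the cited Lemma \ref{ACR} on interaction integrals enters, mirroring Step 2 of Lemma \ref{lem:Linv}). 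All remaining terms in \eqref{eq:E21} carry at least one factor of $\Phi_\eps$ or $\Psi_\eps$ against $L^2$-bounded localized profiles and are therefore $\Ocal(\eps^N) = \ocal(\eps^2|\ln\eps|^2)$. Collecting, $\|\Ecal_2\|_{L^2} = \Ocal(\eps^2|\ln\eps|^2)$, and since $\eps^N = \ocal(\eps^2|\ln\eps|^2)$ for $N \ge 2$, the combined estimate $\|\Ecal\|_{L^2\times L^2} = \Ocal(\eps^2|\ln\eps|^2)$ follows. The hard part is getting the sharp constant/rate in the $(\omega_0 - \omega(\eps x))\Theta_\eps$ term and correctly matching the bump-interaction order to $\eps^2|\ln\eps|^2$ via the precise choice of $\rho_\eps$ in \eqref{punti}.
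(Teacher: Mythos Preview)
Your approach is essentially the same as the paper's: term-by-term estimation using Lemmas \ref{phi} and \ref{le:Psi}, the rescaling $x=\eps y$, a second-order Taylor expansion of $\omega$ at the origin (with $\nabla\omega(0)=0$ by evenness), and Lemma \ref{ACR} for the bump-interaction terms. One correction to your $\Ecal_1$ bookkeeping: the sharp $L^\infty$ bound on $\Phi_\eps$ is $\|\Phi_\eps\|_\infty \lesssim \eps^{N/2}$, obtained from Lemma \ref{phi} at $m=2$ via $H^2(\R^N)\hookrightarrow L^\infty(\R^N)$, not by taking $m$ large --- increasing $m$ only weakens $\eps^{N/m}$ toward $1$. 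With this bound each term in \eqref{eq:E11} is handled directly (e.g.\ $\|\Phi_\eps U_\eps^2(\cdot/\eps)\|_{L^2} \lesssim \|\Phi_\eps\|_\infty\,\eps^{N/2} \lesssim \eps^N$, and $\|\Upsilon\Psi_\eps(\cdot/\eps)\|_{L^2}\lesssim \eps^{N/2}\|\Psi_\eps\|_{L^2}\lesssim \eps^N$), so no appeal to ``two correction factors'' is needed for the terms containing a single $\Phi_\eps$ or $\Psi_\eps$. Otherwise your treatment of $\Ecal_2$ --- the dominant $(\omega_0-\omega(\eps x))\Theta_\eps$ contribution giving $\rho_\eps^2=\Ocal(\eps^2|\ln\eps|^2)$, the splitting of $\Upsilon(\eps x)\Phi_\eps(\eps x)-\Upsilon(0)\Phi_\eps(0)$ via \eqref{key1}--\eqref{key2}, and the interaction estimate $e^{-2\sqrt{\omega_0}\rho_\eps/\eps}(\rho_\eps/\eps)^{-(N-1)/2}\lesssim \eps^2|\ln\eps|$ --- matches the paper's proof line for line.
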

\begin{proof} 
Let us start studying $\Ecal_{1}$ given in \eqref{eq:E11}. We have
$$\begin{aligned}\|\Ecal_{1}\|_{L^2(\R^N)}&\lesssim \|\Upsilon\Phi_\eps ^{2}\|_{L^2(\R^N)}+\|\Phi_\eps ^{3}\|_{L^2(\R^N)}+\left\|
\Phi_\eps  U^{2}_{\eps}\left(\frac{\cdot}\eps\right)\right\|_{L^2(\R^N)}+\left\|\Upsilon\Psi_\eps\left(\frac{\cdot}\eps\right)\right\|_{L^2(\R^N)}\\
&+\left\|\Upsilon U_\eps \left(\frac{\cdot}\eps\right)\Psi_\eps\left(\frac{\cdot}\eps\right)\right\|_{L^2(\R^N)}+\left\|\Phi_\eps\Psi_\eps\left(\frac{\cdot}\eps\right)\right\|_{L^2(\R^N)}+\left\|\Phi_\eps U_\eps \left(\frac{\cdot}\eps\right)\Psi_\eps\left(\frac{\cdot}\eps\right)\right\|_{L^2(\R^N)}.
\end{aligned}$$
Lemma \ref{phi} and Sobolev embedding imply that (see Remark \ref{re:regphi})
 $\|\Phi_\eps\|_{\infty}\lesssim \eps^\frac N 2$.
Then we deduce
$$\|\Upsilon\Phi_\eps ^{2}\|_{L^2(\R^N)}\lesssim \|\Phi_\eps\|^2_{\infty}\lesssim \eps^N,\quad \|\Phi_\eps ^{3}\|_{L^2(\R^N)}\lesssim \eps^\frac{3N}{2}$$
and
$$
\left\|
\Phi_\eps  U^{2}_{\eps}\left(\frac{\cdot}\eps\right)\right\|_{L^2(\R^3)}=
\left(\intr
\Phi_\eps^2  U^{4}_{\eps}\left(\frac{x}\eps\right)\right)^\frac12\lesssim
\eps^{\frac N 2}\left(\intr U^{4}_{\eps}\left(\frac{x}\eps\right)\right)^\frac12 \lesssim\eps ^{N}.
$$
Moreover by applying Lemma \ref{le:Psi} we obtain
$$
\left\|\Upsilon\Psi_\eps\left(\frac{\cdot}\eps\right)\right\|_{L^2(\R^N)}\lesssim \eps^{\frac N 2}\|\Psi_\eps\|_{H^2(\R^N)}\lesssim \eps ^{N}.
$$
As far as concern the last three terms, similar computations show that 
$$\left\|\Upsilon U_\eps \left(\frac{\cdot}\eps\right)\Psi_\eps\left(\frac{\cdot}\eps\right)\right\|
_{L^2(\R^N)}+\left\|\Phi_\eps\Psi_\eps\left(\frac{\cdot}\eps\right)\right\|_{L^2(\R^N)}+\left\|
\Phi_\eps U_\eps \left(\frac{\cdot}\eps\right)\Psi_\eps\left(\frac{\cdot}\eps\right)\right\|
_{L^2(\R^N)}\lesssim \eps^{N}.
$$
Let us now  study the $L^{2}(\R^{N})$ norm of the terms in \eqref{eq:E21}. In view of 
\eqref{eq:W0}  and \eqref{eq:peps}, recalling that $x_0=0$ is a critical point of $\omega$ by symmetry, 
\begin{equation}\label{eq:taylor}
\begin{split} 
\|(\omega_0-\omega(\eps x)U_\eps\|_{L^2(\R^N)}&\lesssim \left(\intr \eps^4|x|
^4 U^2\left(x-\frac{P_\eps}{\eps}\right)\, dx\right)^{\frac 12}
\\
&\lesssim \left(\intr \eps^4\left|x+
\frac{P_\eps}{\eps}\right|^4 U^2\left(x\right)\, dx\right)^{\frac 12}
\\ &\lesssim \rho_\eps^2\lesssim \eps^2|\ln\eps|^2 .
\end{split}\end{equation}
On the other hand Lemma \ref{le:Psi} allows us to deduce that
$$ 
\|(\omega_0-\omega(\eps x)\Psi_\eps\|
_{L^2(\R^N)}\lesssim  
\intr \left(\eps^{4} \left|x+\frac{P_{\eps}}{\eps}\right|^{4} \Psi^{2} (x)\right)^{\frac12}
\lesssim \rho_{\eps}^{2}\|\Psi \|_{L^2(\R^N)}
\lesssim \rho_\eps^2\lesssim \eps^2|\ln\eps|^2 .
$$
By \eqref{key1} one has
\[
|\Phi_{\eps}(\eps x)-\Phi_{\eps}(0)|\leq \eps^{\frac{N}m}|\eps x|^{2-\frac{N}m}=\eps^{2}|x|^{1-\frac{N}m}
\]
so that, we obtain
$$\begin{aligned}
\|U_{\eps}\left(\Upsilon(\eps x)\Phi_\eps(\eps x)- \Phi_\eps(0)\Upsilon(0)\right) \|_{L^2(\R^N)}
\lesssim &
\|U_\eps \Upsilon(\eps x)(\Phi_\eps(\eps x)-\Phi_\eps(0))\|_{L^2(\R^N)}
\\
&+\|U_\eps (\Upsilon(\eps x)-Y(0))\Phi_\eps(0)\|_{L^2(\R^N)}
\\
\lesssim & \eps^2  \left(\intr \left(U(x)\left|x+\frac{P_\eps}\eps\right|^{ 2-\frac Nm}\right)^2\, dx\right)^{\frac 12}\\ 
&+\eps^2\|\Phi_\eps\|_{L^\infty(\R^N)}  \left(\intr \left(U(x)\left|x+\frac{P_\eps}\eps\right|^{ 2}\right)^2\, dx\right)^{\frac 12}\\ 
\lesssim &
  \eps^2\left(\frac{\rho_\eps}\eps\right)^{ 2-\frac Nm}+ \eps^{2+\frac N2}\left(\frac{\rho_\eps}\eps\right)^{ 2}\\ 
  \lesssim &\eps^2|\ln \eps|^{2-\frac N  m}+\eps^{2+\frac N2}|\ln\eps|^2
\lesssim \eps^2|\ln \eps|^{2-\frac N  m}
\end{aligned}
$$ 
By  Lemma \ref{ACR} - (i) one also has 
$$
\begin{aligned}
\|U_\eps^3-U_{P_{\eps}}^3-U^{3}_{-P_{\eps}}\|_{L^2(\R^N)}&\lesssim \|U_{P_\eps}^2U_{-P_\eps}\|_{L^2(\R^N)}+\|U_{-P_\eps}^2U_{P_\eps}\|_{L^2(\R^N)}
\\
&\lesssim \left(\intr U^2\left(x-2\frac{P_\eps}{\eps}\right)U^4(x)\, dx\right)^{\frac 12}
\lesssim e^{-2\sqrt{\omega_0}\frac{\rho_\eps}{\eps}}\left(\frac{\rho_\eps}{\eps}\right)^{-\frac{N-1}{2}}\\
&\lesssim \eps\rho_\eps\lesssim \eps^2|\ln\eps|.
 \end{aligned}
 $$
Moreover, conclusion (ii) of Lemma \ref{ACR} and \eqref{eq:peps} yield
$$
\begin{aligned}
\|\left(U_{\eps}^{2}-U^{2}_{P_{\eps}}-U^{2}_{-P_{\eps}}\right)\Psi_{\eps}\|_{L^2(\R^N)}
&\lesssim \|\Psi_\eps\|_\infty \left(\intr U^2\left(x-\frac{2P_\eps}{\eps}\right)U^2(x)\, dx\right)^{\frac 12}
\\
&
\lesssim \eps^{\frac{N}{2}}\left\{\begin{aligned}& e^{-2\sqrt{\omega_0}\frac{\rho_\eps}{\eps}}\left(\frac{\rho_\eps}{\eps}\right)^{-\frac 14}\ &\hbox{if}\ N=2
\\
& e^{-2\sqrt{\omega_0}\frac{\rho_\eps}{\eps}}\left(\frac{\rho_\eps}{\eps}\right)^{-1}\left|\ln\frac{\rho_\eps}{\eps}\right|^{\frac 12} \,  
&\hbox{if}\ N=3\end{aligned}\right.\\
&=o(\eps^2|\ln\eps|^2)
\end{aligned}
$$ 
as, by using \eqref{eq:peps} it follows that $\rho_\eps\sim \frac{1}{\sqrt{\omega_0}}\eps|\ln\eps|$ and hence
$\eps^{\frac N 2}e^{-2\sqrt{\omega_0}\frac{\rho_\eps}{\eps}}\left(\frac{\rho_\eps}{\eps}\right)^{-\frac{N-1}{2}}=o(\eps^2|\ln\eps|^2)$. 

Let us finally study the last terms in \eqref{eq:E21}
$$\|\Phi^{2}_{\eps}(\eps x)\Theta_{\eps}\|_{L^2(\R^N)}\lesssim \|\Phi_\eps\|^2_\infty\lesssim \eps^N; \quad \|\Psi_{\eps}\Upsilon(\eps x)\Phi_{\eps}(\eps x)\|_{L^2(\R^N)}\lesssim \eps^N$$
and
$$\|U_{\eps}\Psi_{\eps}^{2}\|_{L^2(\R^N)}\lesssim \eps^N;\quad \|\Psi_{\eps}^{3}\|_{L^2(\R^N)}
\lesssim \eps^{\frac{3N}{2}},$$
concluding the proof.
\end{proof}

\subsection{Solving  the second  equation in \eqref{eq:sistpro} }\label{fixed}
 Lemma \ref{lem:Linv} and  Proposition  \ref{pro:E} yields the following result
\begin{proposition}\label{pro:eqorto}

There exists $\eps>0$ such that for every $\eps\in (0,\eps_{0})$ there exists a unique
solution $(\varphi,\psi)\in K^{\perp}$ of the equation
\[
\widetilde{\Pi}^{\perp}\left(\Lcal(\varphi,\psi)-\Ecal-\Ncal(\varphi,\psi)\right)=0.
\]
Furthermore,
\begin{equation}\label{rate}
  \|(\varphi,\psi)\|_{X }=  \Ocal\left(\eps^2|\ln\eps|^{2}\right).
\end{equation}
\end{proposition}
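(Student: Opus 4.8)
The plan is to recast the equation as a fixed-point problem $(\varphi,\psi)=\Tcal(\varphi,\psi)$ on a small ball of $K^{\perp}$ and apply the Banach contraction theorem. First I would upgrade Lemma \ref{lem:Linv} to the statement that $\widetilde{\Pi}^{\perp}\Lcal\colon K^{\perp}\to\widetilde{K}^{\perp}$ is invertible, with inverse bounded by $1/c$ uniformly for $\eps\in(0,\eps_0)$. To this end I would split $\Lcal$ as the sum of the uncoupled principal part $(\varphi,\psi)\mapsto\bigl(-\Delta\varphi+V(x)\varphi,\,-\Delta\psi+W(\eps x)\psi\bigr)$, which is an isomorphism from $H^2_V\times H^2_{W_\eps}$ onto $L^2\times L^2$ by ${\bf (V_{2})}$ and ${\bf (W)}$, plus the multiplication operators appearing in \eqref{eq:L21}; the coefficients of the latter are bounded in $L^\infty(\R^N)$ uniformly in $\eps$ (by Lemmas \ref{phi}, \ref{le:Psi} together with ${\bf (V_{1})},{\bf (W)}$) and vanish at infinity, so these perturbations are relatively compact and $\widetilde{\Pi}^{\perp}\Lcal$ is Fredholm of index zero on $K^{\perp}$. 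The injectivity and a priori bound of Lemma \ref{lem:Linv} then give invertibility with the stated $\eps$-uniform norm, and the equation $\widetilde{\Pi}^{\perp}\bigl(\Lcal(\varphi,\psi)-\Ecal-\Ncal(\varphi,\psi)\bigr)=0$ becomes $(\varphi,\psi)=\Tcal(\varphi,\psi):=(\widetilde{\Pi}^{\perp}\Lcal)^{-1}\widetilde{\Pi}^{\perp}\bigl(\Ecal+\Ncal(\varphi,\psi)\bigr)$.

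Next I would establish the two nonlinear estimates on bounded subsets of $X$: for $(\varphi,\psi),(\varphi_i,\psi_i)$ in a fixed ball,
\[
\|\Ncal(\varphi,\psi)\|_{L^2\times L^2}\lesssim\|(\varphi,\psi)\|_X^2,\qquad \|\Ncal(\varphi_1,\psi_1)-\Ncal(\varphi_2,\psi_2)\|_{L^2\times L^2}\lesssim\bigl(\|(\varphi_1,\psi_1)\|_X+\|(\varphi_2,\psi_2)\|_X\bigr)\|(\varphi_1-\varphi_2,\psi_1-\psi_2)\|_X,
\]
with constants independent of $\eps$. These follow because $\Ncal$ consists only of quadratic and cubic expressions in $(\varphi,\psi)$ with coefficients $\Xi_\eps,\Theta_\eps$ bounded in $L^\infty(\R^N)$ uniformly in $\eps$: one combines Hölder's inequality with the Sobolev embeddings $H^2(\R^N)\hookrightarrow L^\infty(\R^N)$ and $H^1(\R^N)\hookrightarrow L^4(\R^N)\cap L^6(\R^N)$, valid since $N=2,3$. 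The mismatched dilations $x\mapsto x/\eps$ in $\Ncal_1$ and $x\mapsto\eps x$ in $\Ncal_2$ are harmless — a change of variables in the relevant integrals only produces extra positive powers of $\eps$.

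Finally I would run the contraction. Set $\rho_\eps:=M\eps^2|\ln\eps|^2$. By Proposition \ref{pro:E} one has $\|\Ecal\|_{L^2\times L^2}\le C_0\eps^2|\ln\eps|^2$, so for $(\varphi,\psi)$ in the closed ball $\overline{B_{\rho_\eps}}\subset K^{\perp}$,
\[
\|\Tcal(\varphi,\psi)\|_X\le\frac1c\bigl(C_0\eps^2|\ln\eps|^2+C_1\rho_\eps^2\bigr)\le\rho_\eps
\]
once $M$ is large and $\eps$ small, and similarly $\|\Tcal(\varphi_1,\psi_1)-\Tcal(\varphi_2,\psi_2)\|_X\le(C_2/c)\,\rho_\eps\,\|(\varphi_1-\varphi_2,\psi_1-\psi_2)\|_X\le\tfrac12\|(\varphi_1-\varphi_2,\psi_1-\psi_2)\|_X$. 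Hence $\Tcal$ is a contraction of $\overline{B_{\rho_\eps}}$ into itself, its unique fixed point solves the equation, and $\|(\varphi,\psi)\|_X=\Ocal(\eps^2|\ln\eps|^2)$. Uniqueness in all of $K^{\perp}$ follows because any solution obeys $\|(\varphi,\psi)\|_X\le\frac1c\bigl(\|\Ecal\|_{L^2\times L^2}+\|\Ncal(\varphi,\psi)\|_{L^2\times L^2}\bigr)$, which forces it into $\overline{B_{\rho_\eps}}$ for $\eps$ small.

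The step I expect to be most delicate is not the contraction itself but keeping every constant uniform in $\eps$ throughout — in particular promoting the coercivity of Lemma \ref{lem:Linv} to genuine $\eps$-uniform invertibility of $\widetilde{\Pi}^{\perp}\Lcal$, and controlling the nonlinear terms in the presence of the two competing scales $x$ and $x/\eps$ (equivalently $\eps x$).
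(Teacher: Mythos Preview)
Your approach coincides with the paper's: define $\Tcal=(\widetilde\Pi^{\perp}\Lcal)^{-1}\widetilde\Pi^{\perp}(\Ecal+\Ncal)$ and run the Banach contraction on a ball of radius $\sim\eps^{2}|\ln\eps|^{2}$ in $K^{\perp}$, using Lemma~\ref{lem:Linv} for the linear inverse and Proposition~\ref{pro:E} for the size of $\Ecal$. The only slip is the assertion that the rescaling in $\Ncal_{2}$ ``only produces extra positive powers of $\eps$'': the substitution $y=\eps x$ actually contributes $\eps^{-N}$, and the paper's estimate for $\|\Ncal_{2}\|_{L^{2}}^{2}$ indeed carries factors $\eps^{-N/2}$ and $\eps^{-2N/3}$---these are harmless only because they multiply higher powers of $\tau_{\eps}$, or equivalently because one can route the $\varphi(\eps\,\cdot)$ terms through the embedding $H^{2}\hookrightarrow L^{\infty}$ (which you do mention) so that $\|\varphi(\eps\,\cdot)\|_{L^{\infty}}=\|\varphi\|_{L^{\infty}}$ and no rescaling enters.
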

\begin{proof}
We will obtain the result by applying the contraction principle to the continuous map
\[
T:C_{\eps}:=\left\{(\varphi,\psi)\in  K^{\perp}, : \|(\varphi,\psi)\|_{X}\leq A\tau_{\eps}\right\}\mapsto
C_{\eps}
\qquad T(\varphi,\psi):= {\bar\Lcal}\left[\widetilde{\Pi}^{\perp}\left(\Ecal+\Ncal(\varphi,\psi)\right)\right]
\]
where 
$$
\tau_{\eps}=\eps^{2}|\ln \eps|^{2},\qquad  \bar\Lcal:=\left(\widetilde{\Pi}^{\perp}\circ \Lcal\right)^{-1},
$$
is well defined thanks to Lemma \ref{lem:Linv},  and  $A$  is  a suitable positive constant  to  be chosen.
In order to find $A$, it is sufficient to prove that
\begin{equation}\label{eq:N}
\|\Ncal(\varphi,\psi)\|_{L^{2}(\R^{N})}=o(\tau_{\eps}),\qquad \text{forall $(\varphi,\psi)\in C_{\eps}$ }.
\end{equation}
Let us start studying $\Ncal_{1}$, taking into account that $\|\Xi_{\eps}\|_{L^{\infty}(\R^{N})}\leq C$,  it results
\begin{align}
\nonumber
\|\Ncal_{1}(\varphi,\psi)\|_{L^{2}(\R^{N})}^{2} 
\lesssim &
\intr  \varphi^{6}+\Xi_{\eps}^2\varphi^{4}+\psi^{2}\left(\frac{x}\eps\right)\varphi^{2}
\Theta^{2}_{\eps}\left(\frac{x}\eps\right)+\psi^{4}\left(\frac{x}\eps\right)\varphi^{2}+
 \Xi^2_{\eps}\psi^{4}\left(\frac{x}\eps\right)
\\
\nonumber\lesssim &
\|\varphi\|_{V}^{6}+\|\varphi\|_{V}^{4}+\| \varphi\|_{4}^{2}\left(\intr \psi^{4}\left(\frac{x}\eps\right)\Theta_\eps  ^{4}\left(\frac{x}\eps\right)\right)^{\frac12}
+ \left(\intr\psi^{6}\left(\frac{x}\eps\right)\right)^{\frac23}
\| \varphi\|_{6}^{2}
\\
\nonumber&+ \intr \psi^{4}\left(\frac{x}\eps\right)
\\
\nonumber\lesssim&
\|\varphi\|_{V}^{6}+\|\varphi\|_{V}^{4}+\eps^{\frac{N}2}\|\varphi\|_{V}^{2}\|\psi\|_{ \eps}^{2}+\eps^{\frac{2N}3}
\|\varphi\|_{V}^{2}\|\psi\|_{\eps}^{4}+\eps^{N}\|\psi\|_{\eps}^{4}
\\
\label{eq:N1piccolo}\lesssim&
\tau_{\eps}^{6}+\tau_{\eps}^{4}+\eps^{\frac{N}2}\tau_{\eps}^{4}+\eps^{\frac{2N}3}\tau^{6}_{\eps}+\eps^{N}\tau^{4}_{\eps}=o(\tau^{2}_{\eps}).
\end{align}
In addition, 
\begin{align*}
\|\Ncal_{2}(\varphi,\psi)\|^{2}_{L^{2}(\R^{N})}  \lesssim &
\intr \psi^{6}+\Theta_\eps  ^{2}\psi^{4}+\psi^{2}\varphi^{2}(\eps x)\Xi_{\eps}^{2}(\eps x)+
\psi^{2}\varphi^{4}(\eps x)+\Theta^{2}_{\eps}\varphi^{4}(\eps x)
\\
\lesssim &
\|\psi\|_{\eps}^{6}+\|\psi\|_{\eps}^{4}+\|\psi\|_{4}^{2}\left(
\intr \varphi^{4}(\eps x)\right)^{\frac12}+
\|\psi\|_{6}^{2}\left(\intr \varphi^{6}(\eps x)\right)^{\frac23}
\\
&+ \left(\intr \varphi^{6}(\eps x)\right)^{\frac23}
\\
\lesssim &
\|\psi\|_{ \eps}^{6}+\|\psi\|_{\eps}^{4}+\eps^{-\frac{N}2}\|\psi\|_{\eps}^{2}\|\varphi\|_{V}^{2}+\eps^{-\frac{2N}3}\|\psi\|_{\eps}^{2}\|\varphi\|_{V}^4+\eps^{-\frac{2N}3}\|\varphi\|_{V}^{4}
=o(\tau^{2}_{\eps} ).
\end{align*}
this, together with \eqref{eq:N1piccolo}, implies \eqref{eq:N}. Then, the claim follows by the contraction mapping theorem.
\end{proof}
\begin{remark}\label{cruciale}
Note that, by the Sobolev embeddings
$ H^2(\R^3)\hookrightarrow  C^{0,\frac12}(\R^3)$,  
$ H^2(\R^2)\hookrightarrow C^{0,\alpha}(\R^2)$ for any  $\alpha\in(0,1) $,
we deduce that
\begin{itemize}
\item $\|\varphi\|_ {C^{0,\frac12}(\R^3)}+\|\psi\|_ {C^{0,\frac12}(\R^3)}\lesssim \eps^2|\ln\eps|^{2}$,
\item $\|\varphi\|_ {C^{0,\alpha}(\R^2)}+\|\psi\|_ {C^{0,\alpha}(\R^2)}\lesssim \eps^2|\ln\eps|^{2}$, for every $\alpha\in (0,1)$,
\end{itemize}
so that 
\begin{itemize}
\item
$\dys |\varphi(\eps x)-\varphi(0)|\leq \eps^{2}|\ln\eps|^2(\eps|x|)^{1/2}=\eps^{\frac 52}|\ln\eps|^2|x|^{1/2}$ for $N=3$.
\item
$|\varphi(\eps x)-\varphi(0)|\leq \eps^{2}|\ln\eps|^2(\eps|x|)^{\alpha}=\eps^{2+\alpha}|\ln\eps|^2|x|^{\alpha}$ for $N=2$, for every $\alpha\in(0,1)$,
\end{itemize}
and analogous estimates hold for $\psi$.
\end{remark}

\section{Solving the reduced problem}\label{reduced}
In this section, we are going to study the first equation in \eqref{eq:sistpro}.
Let $(\varphi,\psi)$ the solution of the second equation in \eqref{eq:sistpro}, then
\[
\widetilde{\Pi}\left\{\Lcal(\varphi,\psi)-\Ecal-\Ncal(\varphi,\psi)\right\}=c_{0}Z_\eps 
\] 
where 
\begin{equation}\label{c0}
c_{0}:=\frac{\left(\Lcal_{2}(\varphi,\psi)-\Ecal_{2}-\Ncal_{2}(\varphi,\psi), Z_\eps \right)_{L^{2}(\R^{N})}}{\|Z_\eps \|
^{2}}.
\end{equation}

Our goal will be to prove that   $c_{0}=0$.
From now on, we fix  $(\varphi,\psi)$  given in Proposition \ref{pro:eqorto}.
\begin{lemma}\label{lem:L2ker}
It results 
\[
\left|\left(\Ncal_{2}(\varphi,\psi)-\Lcal_{2}(\varphi,\psi), Z_\eps \right)_{L^{2}(\R^{N})}\right|\leq o(\eps^{2}|\ln\eps|)
\]
\end{lemma}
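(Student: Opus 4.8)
The plan is to write $\bigl(\Ncal_{2}(\varphi,\psi)-\Lcal_{2}(\varphi,\psi),Z_\eps\bigr)_{L^{2}(\R^{N})}=\bigl(\Ncal_{2}(\varphi,\psi),Z_\eps\bigr)_{L^{2}}-\bigl(\Lcal_{2}(\varphi,\psi),Z_\eps\bigr)_{L^{2}}$ and to treat the two terms separately. The nonlinear contribution is harmless: every summand of $\Ncal_{2}(\varphi,\psi)$ is at least quadratic in $(\varphi,\psi)$, while $\|\Theta_\eps\|_{L^{\infty}}+\|\Xi_\eps\|_{L^{\infty}}+\|Z_\eps\|_{L^{2}}\lesssim 1$; hence, using $\|(\varphi,\psi)\|_{X}=\Ocal(\eps^{2}|\ln\eps|^{2})$ from Proposition~\ref{pro:eqorto} and the Sobolev embeddings, $\bigl|\bigl(\Ncal_{2}(\varphi,\psi),Z_\eps\bigr)_{L^{2}}\bigr|\lesssim\|(\varphi,\psi)\|_{X}^{2}=\Ocal(\eps^{4}|\ln\eps|^{4})=o(\eps^{2}|\ln\eps|)$. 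So everything reduces to estimating $\bigl(\Lcal_{2}(\varphi,\psi),Z_\eps\bigr)_{L^{2}}$.

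I would split $\Lcal_{2}(\varphi,\psi)=L_\eps\psi-2\beta\,\Theta_\eps\,\Xi_\eps(\eps x)\,\varphi(\eps x)$, where $L_\eps v:=-\Delta v+W(\eps x)v-\bigl(3\mu_{2}\Theta_\eps^{2}+\beta\Xi_\eps^{2}(\eps x)\bigr)v$ is formally self-adjoint. Integrating by parts gives $(L_\eps\psi,Z_\eps)_{L^{2}}=(\psi,L_\eps Z_\eps)_{L^{2}}$, and I would compute $L_\eps Z_\eps$ via \eqref{eq:eqZ}, the expansions $\Theta_\eps^{2}=U_\eps^{2}+\beta\Psi_\eps(2U_\eps+\beta\Psi_\eps)$ and $\Xi_\eps^{2}(\eps x)=\Upsilon^{2}(\eps x)+\beta\Phi_\eps(\eps x)\bigl(2\Upsilon(\eps x)+\beta\Phi_\eps(\eps x)\bigr)$, and the identity $\omega=W-\beta\Upsilon^{2}$. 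This exhibits $L_\eps Z_\eps$ as the sum of: (i) the interaction term $3\mu_{2}\bigl[(U_{P_\eps}^{2}\partial_{1}U_{P_\eps}-U_{-P_\eps}^{2}\partial_{1}U_{-P_\eps})-U_\eps^{2}Z_\eps\bigr]$, which only contains products of bumps centred at the far apart points $\pm P_\eps/\eps$ and is therefore $\Ocal(\eps^{\sigma})$ in $L^{2}$ for some $\sigma>0$, by Lemma~\ref{ACR} and \eqref{punti}; (ii) $(\omega(\eps x)-\omega_{0})Z_\eps$, of $L^{2}$ size $\Ocal(\rho_\eps^{2})=\Ocal(\eps^{2}|\ln\eps|^{2})$ by a Taylor expansion at the critical point $0$ of $\omega$, as in \eqref{eq:taylor}; and (iii) the terms carrying a factor $\Phi_\eps(\eps x)$ or $\Psi_\eps$, of $L^{2}$ size $\Ocal(\eps^{N/2})$ by Lemmas~\ref{phi} and~\ref{le:Psi}. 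Pairing each of these with $\psi$, whose $L^{2}$ norm is $\Ocal(\eps^{2}|\ln\eps|^{2})$, yields $o(\eps^{2}|\ln\eps|)$.

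The genuinely delicate term is the coupling contribution $-2\beta\bigl(\Theta_\eps\,\Xi_\eps(\eps x)\,\varphi(\eps x),Z_\eps\bigr)_{L^{2}}$, for which the crude bound $\|\Theta_\eps Z_\eps\|_{L^{1}}\|\Xi_\eps\|_{L^{\infty}}\|\varphi\|_{L^{\infty}}=\Ocal(\eps^{2}|\ln\eps|^{2})$ is not enough; here I would exploit two cancellations. First, since $\|\Psi_\eps\|_{L^{\infty}}\lesssim\eps^{N/2}$, the $\beta\Psi_\eps$ summand of $\Theta_\eps$ contributes at most $\eps^{N/2}\|Z_\eps\|_{L^{1}}\|\varphi\|_{L^{\infty}}=\Ocal(\eps^{2+N/2}|\ln\eps|^{2})=o(\eps^{2}|\ln\eps|)$ already crudely, so one may replace $\Theta_\eps$ by $U_\eps$. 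Second, write $\Xi_\eps(\eps x)\varphi(\eps x)=\Xi_\eps(0)\varphi(0)+R_\eps(x)$. For the constant part, near each peak the integrand of $\int_{\R^{N}}U_\eps Z_\eps$ coincides, up to an exponentially small error, with $\pm\tfrac12\partial_{1}\bigl(U_{\pm P_\eps}^{2}\bigr)$, which has zero integral; hence $\int_{\R^{N}}U_\eps Z_\eps$ is an interaction integral and is $\Ocal(\eps^{\sigma})$, $\sigma>0$, by Lemma~\ref{ACR}; combined with $|\varphi(0)|\lesssim\|\varphi\|_{L^{\infty}}=\Ocal(\eps^{2}|\ln\eps|^{2})$, this gives $\Ocal(\eps^{2+\sigma}|\ln\eps|^{2})=o(\eps^{2}|\ln\eps|)$. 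For the remainder, on the exponentially localized support of $U_\eps Z_\eps$ --- i.e.\ at bounded distance from $\pm P_\eps/\eps$, where $\eps x=\pm P_\eps+\Ocal(\eps)$ and $|x|=\Ocal(|\ln\eps|)$ --- I would write $R_\eps=\bigl(\Xi_\eps(\eps x)-\Xi_\eps(0)\bigr)\varphi(\eps x)+\Xi_\eps(0)\bigl(\varphi(\eps x)-\varphi(0)\bigr)$ and estimate it through $|\Upsilon(\eps x)-\Upsilon(0)|\lesssim|\eps x|^{2}$ and $|\Phi_\eps(\eps x)-\Phi_\eps(0)|\lesssim\eps^{N/m}|\eps x|^{2-N/m}$ from \eqref{key1}--\eqref{key2}, together with the H\"older bound $|\varphi(\eps x)-\varphi(0)|\lesssim\eps^{2}|\ln\eps|^{2}(\eps|x|)^{\alpha}$ from Remark~\ref{cruciale} (with $\alpha=\tfrac12$ if $N=3$ and any $\alpha\in(0,1)$ if $N=2$); using the exponential decay of $U_\eps Z_\eps$ in the local variable $y$, so that $\int|U\,\partial_{1}U|\,(c|\ln\eps|+|y|)^{\theta}\,dy\lesssim|\ln\eps|^{\theta}$, one obtains $\int_{\R^{N}}U_\eps Z_\eps R_\eps=\Ocal(\eps^{5/2}|\ln\eps|^{5/2})$ when $N=3$ and $\Ocal(\eps^{2+\alpha}|\ln\eps|^{2+\alpha})$ when $N=2$, hence $o(\eps^{2}|\ln\eps|)$ in both cases.

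Putting the three steps together yields the assertion. I expect the last step to be the main obstacle: in contrast with Proposition~\ref{pro:E}, the factor $\varphi$ is only H\"older continuous, so $\varphi(\eps x)$ cannot be Taylor-expanded to second order, and the required smallness must be squeezed out by combining the zero-average identity $U\,\partial_{1}U=\tfrac12\partial_{1}(U^{2})$ on each peak --- which collapses the otherwise non-negligible leading piece $\Xi_\eps(0)\varphi(0)\int_{\R^{N}}U_\eps Z_\eps$ to an interaction integral --- with the sharp H\"older estimate on $\varphi$ and the optimal $L^{\infty}$ bounds on $\Phi_\eps,\Psi_\eps$ from Lemmas~\ref{phi},~\ref{le:Psi} and Remark~\ref{cruciale}; tracking the logarithmic powers with care is essential, since the target is $o(\eps^{2}|\ln\eps|)$ and not merely $o(\eps^{2}|\ln\eps|^{2})$.
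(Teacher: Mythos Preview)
Your proposal is correct and follows essentially the same route as the paper: both treat $\Ncal_{2}$ quadratically, integrate $(\Lcal_{2}\psi,Z_\eps)$ by parts via \eqref{eq:eqZ} to obtain the same $(\omega(\eps x)-\omega_0)Z_\eps$, interaction, and $\Phi_\eps/\Psi_\eps$ pieces paired with $\psi$, and then isolate the coupling term $\int U_\eps\,\Xi_\eps(\eps x)\varphi(\eps x)Z_\eps$ as the delicate contribution, handling it by subtracting $\Xi_\eps(0)\varphi(0)$, using $\int U_{\pm P_\eps}\partial_1 U_{\pm P_\eps}=0$ to reduce the constant part to an interaction integral, and controlling the remainder through the H\"older bound on $\varphi$ from Remark~\ref{cruciale} together with \eqref{key1}--\eqref{key2}. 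The only cosmetic difference is that you package the self-adjoint part as an operator $L_\eps$ and compute $L_\eps Z_\eps$, whereas the paper writes out the same expansion term by term; the resulting estimates and the final rates ($\eps^{5/2}|\ln\eps|^{5/2}$ for $N=3$, $\eps^{2+\alpha}|\ln\eps|^{2+\alpha}$ for $N=2$) coincide.
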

\begin{proof} 
Arguing as in the proof of Proposition \ref{pro:eqorto} it is easy to obtain that
$$
\intr \Ncal_2(\varphi, \psi)Z_\eps\, dx =\Ocal (\|(\varphi, \psi)\|^2)=o(\eps^{2}|\ln \eps|).
$$
Now by using \eqref{eq:L21} and \eqref{eq:eqZ} we get that
\begin{align} 
\nonumber 
\intr \Lcal_2(\varphi, \psi)Z_\eps\, dx=&
\intr \nabla \psi\nabla Z_{\eps}+W(\eps x)\psi Z_{\eps}-
\left(3\mu_{2}\Theta_\eps  ^{2}+
\beta \Xi^2 (\eps x) \right)
\psi Z_\eps
\\
&\nonumber -2\beta\intr \Theta_\eps \Xi (\eps x) \varphi(\eps x) Z_\eps
\\
\nonumber=&
\intr\left(W(\eps x)-\omega_{0}\right)\psi Z_{\eps}
+
3\mu_2\left(U^2_{P_\eps}\partial_{x_1}U_{P_\eps}- U^2_{-P_\eps}\partial_{x_1} U_{-P_\eps}\right)
\psi
\\
\nonumber&-\intr\left(3\mu_{2}\Theta_\eps  ^{2}+
\beta \Xi^2 (\eps x) \right)
\psi Z_\eps-2\beta\intr \Theta_\eps \Xi (\eps x) \varphi(\eps x) Z_\eps
\\
\nonumber=&
\intr (\omega(\eps x)-\omega(0))Z_\eps\psi\, dx +3\mu_2\intr U_{P_\eps}^2\partial_1 U_{-P_\eps}\psi\, dx 
\\
&\nonumber-3\mu_2\intr U_{-P_\eps}^2\partial_1 U_{P_\eps}\psi
-6\mu_2\intr U_{P_\eps}U_{-P_\eps}Z_\eps\psi\, dx\\
\nonumber&-3\mu_2\beta^2\intr \Psi_\eps^2Z_\eps\psi\, dx -6\mu_2\beta\intr U_\eps\Psi_\eps Z_\eps\psi\, dx
 \\
\nonumber& -\beta^3\intr \Phi_\eps^2(\eps x)Z_\eps\psi\, dx -2\beta^2\intr \Upsilon(\eps x)\Phi_\eps(\eps x)Z_\eps\psi\, dx
 \\
\label{eq:psifizeta}&-2\beta^2 \intr \Psi_\eps \Upsilon(\eps x) \varphi(\eps x) Z_\eps\, dx-2\beta^3\intr \Psi_\eps \Phi_\eps(\eps x) \varphi(\eps x) Z_\eps\, dx
\\
\label{eq:ufizeta} &-2\beta\intr U_\eps \Upsilon(\eps x)\varphi(\eps x) Z_\eps\, dx-2\beta^2 \intr U_\eps \Phi_\eps(\eps x) \varphi(\eps x) Z_\eps\, dx.
\end{align} 
Let us study the right hand side. First of all, arguing as in \eqref{eq:taylor} and taking into account \eqref{rate}, we have
\[
\begin{split}
\intr (\omega(\eps x)-\omega(0))Z_\eps\psi\, dx 
&
\lesssim \left(\intr |\omega(\eps x)-\omega(0)|^2Z_\eps^2\, dx\right)^{\frac 12}\|\psi\|_{L^{2}(\R^{N})}
\\
&\lesssim \rho_\eps^2
\|\psi\|_{L^{2}(\R^{N})}=o(\eps^2|\ln\eps|).
\end{split}\]
\[
\begin{split}
\intr U_{P_\eps}^2\partial_1 U_{-P_\eps}\psi\, dx&\lesssim \left(\intr U_{P_\eps}^4 U_{-P_\eps}^2\, dx\right)^{\frac 12}\|\psi\|_{L^{2}(\R^{N})}
\\
&\lesssim e^{-2\sqrt{\omega_0}\frac{\rho_\eps}{\eps}}\left(\frac{\rho_\eps}{\eps}\right)^{-\frac{N-1}{2}}\|\psi\|_{L^{2}(\R^{N})}=o(\eps^2|\ln\eps|),
\end{split}\]
where we have applied Lemma \ref{ACR}.
Similarly 
$$
\intr U_{-P_\eps}^2\partial_1 U_{P_\eps}\psi=o(\eps^2|\ln\eps|),\qquad
\intr U_{P_\eps}U_{-P_\eps}Z_\eps\psi\, dx\lesssim \left(\intr U_{P_\eps}^4U_{-P_\eps}^2\, dx\right)^{\frac 12}\|\psi\|_{L^{2}(\R^{N})}=o(\eps^2|\ln\eps|).
$$ 
In addition, \eqref{rate} and Lemma \ref{phi},  \ref{le:Psi} yield
\[
\begin{split}
\intr \Psi_\eps^2Z_\eps\psi\, dx
&\lesssim \|\Psi_\eps\|_{L^{4}(\R^N)}^2\|\psi\|_{L^{2}(\R^{N})}\lesssim \eps^N\|\psi\|_{L^{2}(\R^{N})}=o(\eps^2|\ln\eps|).
\\
\intr U_\eps\Psi_\eps Z_\eps\psi\, dx
&\lesssim \|\Psi_\eps\|_{L^2(\R^N)}\|\psi\|_{L^{2}(\R^{N})}
\lesssim \eps^{\frac N 2}\|\psi\|_{L^{2}(\R^{N})}
\lesssim
\eps^{\frac N 2}\eps^2|\ln\eps|^2  =o(\eps^2|\ln\eps|)
\\
\intr \Phi_\eps^2(\eps x)Z_\eps\psi\, dx 
&\lesssim \|\Phi_\eps\|^2_{\infty}\|\psi\|_{L^{2}(\R^{N})}\lesssim \eps^N\|\psi\|_{L^{2}(\R^{N})}=o(\eps^2|\ln\eps|)
\\
\intr \Upsilon(\eps x)\Phi_\eps(\eps x)Z_\eps\psi\, dx
&
\lesssim \|\Phi_\eps\|_\infty\|\psi\|_{L^{2}(\R^{N})}\lesssim
\eps^{\frac N 2}\eps^2|\ln\eps|^2  =o(\eps^2|\ln\eps|)
\end{split} \]
and the terms in \eqref{eq:psifizeta} can be handled analogously.
Let us  focus  on the  terms in \eqref{eq:ufizeta}.
Recalling that 
\begin{equation}\label{eq:udu}
\intr U_{\pm P_\eps}\partial_1 U_{\pm P_\eps}\, dx=0, \qquad 
\end{equation}
we obtain
\begin{equation}\label{eq:last}
 \begin{split}
 \intr U_\eps \Upsilon(\eps x)\varphi(\eps x) Z_\eps\, dx=&
 \intr \Upsilon(\eps x)(\varphi(\eps x)-\varphi(0))U_\eps Z_\eps\, dx 
 \\
 &+\intr \varphi(0) (\Upsilon(\eps x)-\Upsilon(0))U_\eps Z_\eps\, dx +\Upsilon(0)\varphi(0)\intr U_\eps Z_\eps\, dx
 \\
=&\intr \Upsilon(\eps x)(\varphi(\eps x)-\varphi(0))U_\eps Z_\eps\, dx 
\\
&+\intr \varphi(0) (\Upsilon(\eps x)-\Upsilon(0))U_\eps Z_\eps\, dx  
 \\
 &-\Upsilon(0)\varphi(0)\intr \left(U_{P_\eps}\partial_1 U_{-P_\eps} -U_{-P_\eps}\partial_1 U_{P_\eps} \right)dx.
 \end{split}
\end{equation}
Taking into account Remark \ref{cruciale} (choosing $\alpha=\frac12$ for $N=2$), we infer
$$
\begin{aligned}
\intr \Upsilon(\eps x)(\varphi(\eps x)-\varphi(0))U_\eps Z_\eps\, dx&\lesssim 
\eps^{\frac 52}|\ln\eps|^2\intr |x|^{\frac 12}U_{P_\eps}^2\, dx\ 
\\
&\lesssim
\eps^{\frac 52}|\ln\eps|^2\intr \left|x+\frac{P_\eps}{\eps}\right|^{\frac 12}{U^2}\, dx\ 
\\
&\lesssim 
\eps^{2}\rho_\eps^{\frac 12}|\ln\eps|^2\ 
=o(\eps^2|\ln\eps|).
\end{aligned}
$$
Moreover, \eqref{eq:peps} and \eqref{rate} yield
$$
\intr \varphi(0) (\Upsilon(\eps x)-\Upsilon(0))U_\eps Z_\eps\, dx
\lesssim \eps^2\|\varphi\|_{\infty}
\intr |x|^2U_{P_\eps}^2\, dx \lesssim \rho_\eps^2\|\varphi\|_{V}=o(\eps^2|\ln\eps|).
$$
The last two terms in \eqref{eq:last} can be studied similarly,  by applying Lemma \ref{ACR}.
It results
$$
\Upsilon(0)\varphi(0)\intr U_{P_\eps}\partial_1 U_{-P_\eps}\, dx\lesssim \|\varphi\|_{\infty}\intr U_{P_\eps}U_{-P_\eps}\lesssim \|\varphi\|_{V} e^{-2\sqrt{\omega_0}\frac{\rho_\eps}{\eps}}\left(\frac{\rho_\eps}{\eps}\right)^{-\frac{N-1}{2}}=o(\eps^2|\ln\eps|).
$$ 
The last term in \eqref{eq:ufizeta} can be easier studied as
 \[
 \begin{split}
\left| \intr U_\eps \Phi_\eps(\eps x) \varphi(\eps x) Z_\eps\, dx\right|\lesssim \|\Phi_{\eps}\|_{\infty}\|\varphi\|_{\infty}\lesssim \eps^{N/2}\|\varphi\|_{V}=o(\eps^2|\ln\eps|)
 \end{split}
 \]
 concluding the proof.
\end{proof}
We are now in position  to study the relevant term in \eqref{c0}.
\begin{lemma}\label{lem:E2ker2} 
It results
\[
\intr\Ecal_{2}Z_\eps\, dx =\left[-\partial_{11}\omega(0)\mathfrak b\eps\rho_\eps-2\mu_2\mathfrak c e^{-2\sqrt{\omega_0}\frac{\rho_\eps}{\eps}}\left(\frac{\rho_\eps}{\eps}\right)^{-\frac{N-1}{2}}\right](1+o(1))\]
for some   positive constants $\mathfrak b$ and  $\mathfrak c$.
\end{lemma}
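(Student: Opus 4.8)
The plan is to substitute the explicit expression \eqref{eq:E21} of $\Ecal_2$ into $\int_{\R^N}\Ecal_2 Z_\eps\,dx$ and to estimate the eight resulting integrals one at a time: exactly two of them produce the two terms in the statement, and all the others turn out to be $o(\eps^2|\ln\eps|)$, hence negligible since $\eps\rho_\eps\sim\eps^2|\ln\eps|$ by \eqref{eq:peps}. Throughout I would use the evenness and exponential decay of $U$, $\Upsilon$, $\Phi_\eps$, $\Psi_\eps$ and $Z_\eps$, the bounds of Lemmas \ref{phi} and \ref{le:Psi}, the interaction estimates of Lemma \ref{ACR}, the decay \eqref{eq:Udecay} of $U$, and $\rho_\eps/\eps\sim|\ln\eps|/\sqrt{\omega_0}$.

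The term carrying $\mathfrak b$ comes from $(\omega_0-\omega(\eps x))\Theta_\eps$. Writing $\Theta_\eps=U_\eps+\beta\Psi_\eps$, the $\Psi_\eps$ part is $O(\eps^{N/2+2}|\ln\eps|^2)$ by Lemma \ref{le:Psi}. For the $U_\eps$ part I would Taylor expand $\omega=W-\beta\Upsilon^2\in C^3$ at the origin: by evenness in each variable $\nabla\omega(0)=0$ and the Hessian $D^2\omega(0)$ is diagonal, so $\omega(\eps x)-\omega(0)=\tfrac{\eps^2}{2}\sum_i\partial_{ii}\omega(0)x_i^2+O(\eps^3|x|^3)$ on the support of $U_\eps Z_\eps$ (on the complementary region $U_\eps Z_\eps$ is super-exponentially small). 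Splitting $U_\eps Z_\eps=\tfrac12\partial_1(U_{-P_\eps}^2)-\tfrac12\partial_1(U_{P_\eps}^2)+(\text{cross terms})$, integrating by parts and translating $y=x\pm P_\eps/\eps$ (so $x_1=y_1\mp\rho_\eps/\eps$), all odd moments of the radial $U$ vanish, so the only nonzero contribution comes from the cross-monomial $\mp2(\rho_\eps/\eps)\,y_1$ inside $(y_1\mp\rho_\eps/\eps)^2$; this gives $-\partial_{11}\omega(0)\,\|U\|_{L^2(\R^N)}^2\,\eps\rho_\eps\,(1+o(1))$, while the cross terms are $O(\rho_\eps^2 e^{-2\sqrt{\omega_0}\rho_\eps/\eps}(\rho_\eps/\eps)^{-(N-1)/2})$ and the cubic remainder is $O(\rho_\eps^3)$, both $o(\eps^2|\ln\eps|)$. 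Thus $\mathfrak b=\|U\|_{L^2(\R^N)}^2>0$.

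The term carrying $\mathfrak c$ comes from $\mu_2(U_\eps^3-U_{P_\eps}^3-U_{-P_\eps}^3)=3\mu_2 U_{P_\eps}U_{-P_\eps}(U_{P_\eps}+U_{-P_\eps})$ paired with $Z_\eps=\partial_1U_{-P_\eps}-\partial_1U_{P_\eps}$. Since the integrand is invariant under $x_1\mapsto-x_1$, it is enough to analyze the part concentrated near $-P_\eps/\eps$ and double it. There $U_{P_\eps}$ and $\partial_1U_{P_\eps}$ are exponentially small, so the leading piece is $3\mu_2\int U_{P_\eps}U_{-P_\eps}^2\partial_1U_{-P_\eps}\,dx=-\mu_2\int_{\R^N}(\partial_1U)(y-2P_\eps/\eps)\,U^3(y)\,dy$; by \eqref{eq:Udecay} and Lemma \ref{ACR} this equals $-\mu_2\,(-U'(2\rho_\eps/\eps))\int_{\R^N}e^{\sqrt{\omega_0}\,y_1}U^3(y)\,dy\,(1+o(1))$, a negative quantity of size $e^{-2\sqrt{\omega_0}\rho_\eps/\eps}(\rho_\eps/\eps)^{-(N-1)/2}$, while the genuinely two-bump remainders carry at least two factors $U_{\pm P_\eps}$ evaluated off their centres and are of lower order. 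Collecting the two symmetric pieces yields $-2\mu_2\,\mathfrak c\,e^{-2\sqrt{\omega_0}\rho_\eps/\eps}(\rho_\eps/\eps)^{-(N-1)/2}(1+o(1))$, with $\mathfrak c>0$ an explicit constant (a positive multiple of $C_0\int_{\R^N}e^{\sqrt{\omega_0}y_1}U^3(y)\,dy$).

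It remains to check the other six integrals. The term $3\beta\mu_2(U_\eps^2-U_{P_\eps}^2-U_{-P_\eps}^2)\Psi_\eps=6\beta\mu_2 U_{P_\eps}U_{-P_\eps}\Psi_\eps$ and every term in \eqref{eq:E21} carrying a positive power of $\Phi_\eps(\eps\cdot)$ or of $\Psi_\eps$ multiplied by bounded or localized factors are estimated crudely, using $\|\Phi_\eps\|_\infty+\|\Psi_\eps\|_\infty\lesssim\eps^{N/2}$ (Lemmas \ref{phi}, \ref{le:Psi}), $\int_{\R^N}|Z_\eps|=O(1)$ and Lemma \ref{ACR}, to be $O(\eps^N)$ or smaller, hence $o(\eps^2|\ln\eps|)$ for $N=2,3$. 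The delicate one is $2\beta^2 U_\eps(\Upsilon(\eps x)\Phi_\eps(\eps x)-\Upsilon(0)\Phi_\eps(0))$. Splitting $\Upsilon(\eps x)\Phi_\eps(\eps x)-\Upsilon(0)\Phi_\eps(0)=\Upsilon(\eps x)(\Phi_\eps(\eps x)-\Phi_\eps(0))+\Phi_\eps(0)(\Upsilon(\eps x)-\Upsilon(0))$ and using \eqref{key1}, \eqref{key2} and $|\Phi_\eps(0)|\lesssim\eps^{N/2}$, all summands but $\int\Upsilon(\eps x)(\Phi_\eps(\eps x)-\Phi_\eps(0))\,U_\eps Z_\eps$ are at once $o(\eps^2|\ln\eps|)$; for this last one the naive bound is $\eps^2(\rho_\eps/\eps)^{2-N/m}\sim\eps^2|\ln\eps|^{2-N/m}$, which does not beat $\eps\rho_\eps$ when $N=2$ (there $m\ge 2$ forces $2-N/m\ge1$). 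The remedy, which is the heart of the proof, is to exploit the divergence structure: replace $U_{\pm P_\eps}\partial_1U_{\pm P_\eps}$ by $\tfrac12\partial_1(U_{\pm P_\eps}^2)$ and integrate by parts; the derivative falling on $\Upsilon(\eps x)(\Phi_\eps(\eps x)-\Phi_\eps(0))$ gains an extra factor $\eps$, while the resulting term $\Upsilon(\eps x)\,\eps\,\partial_1\Phi_\eps(\eps x)$ is controlled via $\partial_1\Phi_\eps(0)=0$ (since $\Phi_\eps$ is even) together with $\|\Phi_\eps\|_{C^{1,1-N/m}}\lesssim\eps^{N/m}$ from Lemma \ref{phi}, which gives $|\partial_1\Phi_\eps(\eps x)|\lesssim\eps|x|^{1-N/m}$ and hence a contribution $o(\eps^2|\ln\eps|)$; the cross terms in $U_\eps Z_\eps$ are exponentially small. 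This is precisely where the $C^{1,1-N/m}$ regularity of $\Phi_\eps$ from Lemma \ref{phi} is used, and it is the step I expect to be the main difficulty. Adding up all contributions gives the stated identity with $\mathfrak b,\mathfrak c>0$.
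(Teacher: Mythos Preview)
Your proposal is correct and follows essentially the same route as the paper: expand $\int\Ecal_2 Z_\eps$ term by term, extract the two leading contributions from $(\omega_0-\omega(\eps\cdot))U_\eps$ (Taylor expansion at $0$, diagonal pieces of $U_\eps Z_\eps$) and from $\mu_2(U_\eps^3-U_{P_\eps}^3-U_{-P_\eps}^3)$ (via Lemma~\ref{decay}), and show every other term is $o(\eps\rho_\eps)$ using Lemmas~\ref{phi}, \ref{le:Psi} and~\ref{ACR}. Your identification $\mathfrak b=\|U\|_{L^2}^2$ agrees with the paper's computation (up to the harmless factor of $2$ that the paper absorbs into the unspecified constant).

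The only substantive difference is your handling of $\int U_\eps\,\Upsilon(\eps x)\bigl(\Phi_\eps(\eps x)-\Phi_\eps(0)\bigr)Z_\eps$. The paper bounds it directly by $\eps^2(\rho_\eps/\eps)^{2-N/m}$ via Remark~\ref{re:regphi} and declares this $o(\eps\rho_\eps)$ ``if $m>N$''; but that inequality goes the wrong way (it would require $1-N/m<0$, i.e.\ $m<N$, incompatible with the $C^{1,1-N/m}$ embedding). Your integration-by-parts remedy---writing $U_{\pm P_\eps}\partial_1 U_{\pm P_\eps}=\tfrac12\partial_1(U_{\pm P_\eps}^2)$ and shifting the derivative onto $\Upsilon(\eps x)\bigl(\Phi_\eps(\eps x)-\Phi_\eps(0)\bigr)$ to gain an extra factor of $\eps$, then controlling $\partial_1\Phi_\eps(\eps x)$ via $\partial_1\Phi_\eps(0)=0$ and the $C^{1,1-N/m}$ bound---is a clean fix that genuinely yields $o(\eps\rho_\eps)$. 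So at this single step your argument is actually more careful than the paper's; elsewhere the two proofs coincide. I would only soften the rhetoric: this step is a technical patch, not ``the heart of the proof''---the real content of the lemma is the exact computation of the two leading terms, which you and the paper carry out identically.
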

\begin{proof}
Recalling the definition of $\Ecal_{2}$ given in  \eqref{eq:E21} we obtain 
\begin{equation}\label{eq:E2Z}
\begin{aligned}
\intr\Ecal_{2}Z_\eps\, dx =&
\intr\left(\omega_0-\omega(\eps x)\right)\Theta_\eps Z_\eps\, dx  
+2\beta^2 \intr U_{\eps}\left(\Upsilon(\eps x)\Phi_\eps(\eps x)- \Phi_\eps(0)\Upsilon(0)\right) Z_\eps\, dx   
\\& 
+\mu_{2}\intr (U_\eps^3-U_{P_{\eps}}^3-U^{3}_{-P_{\eps}}) Z_\eps\, dx 
+3\beta\mu_{2}\intr\left(U_{\eps}^{2}-U^{2}_{P_{\eps}}-U^{2}_{-P_{\eps}}\right)\Psi_{\eps}Z_\eps\, dx 
\\
&+\beta^{3}\intr\Phi^{2}_{\eps}(\eps x)\Theta_{\eps}Z_\eps\, dx +2\beta^{3}\intr \Psi_{\eps}\Upsilon(\eps x)\Phi_{\eps}(\eps x)Z_\eps\, dx 
\\
&+3\mu_{2}\beta^{2}\intr U_{\eps}\Psi_{\eps}^{2}Z_\eps\, dx 
+\mu_{2}\beta^{3}\intr\Psi_{\eps}^{3}Z_\eps\, dx. \end{aligned}
\end{equation}
Direct computations yield
\begin{equation}\label{eq:1}
\begin{split}
\intr\left(\omega_0-\omega(\eps x)\right)\Theta_\eps Z_\eps\, dx  =&
\intr \left(\omega(0)-\omega(\eps x)\right)U_{\eps}(x) Z_\eps\, dx
\\&
 +\beta\intr \left(\omega(0)-\omega(\eps x)\right)\Psi_\eps Z_\eps\, dx\\
=&
\intr \left(\omega_0-\omega(\eps x)\right)U_{P_\eps}\partial_1 U_{P_\eps}\, dx
\\
&-
\intr \left(\omega_0-\omega(\eps x)\right)U_{P_\eps}\partial_1 U_{-P_\eps}\, dx\\
&+\intr \left(\omega_0-\omega(\eps x)\right)U_{-P_\eps}\partial_1 U_{P_\eps}\, dx
\\
&-
\intr \left(\omega_0-\omega(\eps x)\right)U_{-P_\eps}\partial_1 U_{-P_\eps}\, dx
\\
&+\beta\intr \left(\omega(0)-\omega(\eps x)\right)\Psi_\eps Z_\eps\, dx.
\end{split}
\end{equation}
Let us start studying the first term on the right hand side. Taking into account 
\eqref{eq:udu},  we obtain
\begin{align*}
\intr \left(\omega_0-\omega(\eps x)\right)U_{P_\eps}\partial_1 U_{P_\eps}\, dx
=&-\frac12 \intr \langle D^2\omega(0)\eps x, \eps x\rangle U\left(x+\frac{P_\eps}{\eps}\right)\partial_1U\left(x+\frac{P_\eps}{\eps}\right)\, dx
\\
&+{\mathcal O}\left(\intr \eps^3\left|x+\frac{P_\eps}{\eps}\right|^3U^2(y)\, dx\right)
\\
=&-\frac12 \intr \langle D^2\omega(0)(\eps y-P_\eps), \eps y-P_\eps\rangle U\left(y\right)\partial_1U\left(y\right)\, dy+o(\eps\rho_\eps)
\\
=&\frac 12\intr \langle D^2 \omega(0) \eps y, P_\eps\rangle U\left(y\right)\partial_1U\left(y\right)\, dy\\
&+\frac 12 \intr \langle D^2 \omega(0) P_\eps, \eps y\rangle U\left(y\right)\partial_1U\left(y\right)\, dy
+o(\eps\rho_\eps)
\\
=&\eps\rho_\eps\underbrace{\intr \langle D^2 \omega(0) P_0,  y\rangle U\left(y\right)\partial_1U\left(y\right)\, dy}_{:=A}+o(\eps\rho_\eps)\\
=&  \partial^2_{11} \omega(0)\mathfrak b\eps \rho_\eps +o(\eps \rho_\eps).
\end{align*}
Note that, from \eqref{eq:peps} and  from the fact that  $U$ is radially decreasing we deduce that
$$
A:=\partial^2_{11} \omega(0)\underbrace{\intr y_1^2U(y)\frac{U'(y)}{|y|}\, dy}_{<0}+\sum_{i=2}^N\partial^2_{1i}\omega(0)\underbrace{\intr y_1y_i U(y)\frac{U'(y)}{|y|}\, dy}_{=0}
$$ 
so that  
\begin{equation}\label{AAA}
A:=-\partial^2_{11} \omega(0)\mathfrak b\ \hbox{and}\ \mathfrak b:=-\intr y_1^2U(y)\frac{U'(y)}{|y|}\, dy>0
\end{equation} 
since $\partial^2_{11} \omega(0)<0$, thanks to \eqref{min-pot}.
In a similar way we get that 
$$
-\intr \left(\omega_0-\omega(\eps x)\right)U_{-P_\eps}\partial_1 U_{-P_\eps}\, dx=A\eps\rho_\eps+o(\eps\rho_\eps).
$$

We claim that the other terms on the right hand side of \eqref{eq:1} are of higher order with respect to $\eps\rho_{\eps}$. Indeed,  Lemma \ref{decay} and \eqref{eq:peps} yield
$$
\begin{aligned}
\intr \left(\omega_0-\omega(\eps x)\right)U_{-P_\eps}\partial_1 U_{P_\eps}\, dx&=-\frac 12 \intr \langle D^2\omega(0)\eps x, \eps x\rangle U\left(x-\frac{P_\eps}{\eps}\right)\partial_1 U\left(x+\frac{P_\eps}{\eps}\right)\, dx
\\
&+{\mathcal O}\left(\intr \eps^3|x|^3 U\left(x-\frac{P_\eps}{\eps}\right)U\left(x+\frac{P_\eps}{\eps}\right)\, dx\right)
\\
&=-\frac 12 \intr \langle D^2 \omega(0)P_\eps, P_\eps\rangle U\left(y-2\frac{P_\eps}{\eps}\right)\partial_1 U(y)\, dy+o(\eps\rho_\eps)\\
&=-\frac 12 \rho_\eps^2 \langle D^2 \omega(0)P_0, P_0\rangle \intr U\left(y-2\frac{P_\eps}{\eps}\right)\partial_1 U(y)\, dy+o(\eps\rho_\eps)\\
&=\frac 12\mathfrak c \rho_\eps^2\langle D^2 \omega(0)P_0, P_0\rangle e^{-2\sqrt{\omega_0}\frac{\rho_\eps}{\eps}}\left(\frac{\rho_\eps}{\eps}\right)^{-(N-1)+\frac{N+1}{2}}+o(\eps\rho_\eps)\\
&=\frac 12\mathfrak c \rho_\eps^2\langle D^2 \omega(0)P_0, P_0\rangle e^{-2\sqrt{\omega_0}\frac{\rho_\eps}{\eps}}\left(\frac{\rho_\eps}{\eps}\right)^{-\frac{N-1}{2}}\frac{\rho_\eps}{\eps}+o(\eps\rho_\eps)\\
&=o(\eps\rho_\eps).
\end{aligned}
$$

Similarly $$\intr \left(\omega_0-\omega(\eps x)\right)U_{P_\eps}\partial_1 U_{-P_\eps}\, dx=o(\eps\rho_\eps).$$
The last term in \eqref{eq:1} can be managed, by applying Lemma \ref{le:Psi}. Indeed, we have
$$\left|\intr \left(\omega(0)-\omega(\eps x)\right)\Psi_\eps Z_\eps\, dx\right|\leq c\eps^2\|\Psi_\eps\|_{L^2(\R^N)}\left(\intr |x|^4U^2_{P_\eps}\, dx\right)^{\frac 12}\leq \eps^{\frac N 2}\rho_\eps^2=o(\eps\rho_\eps).
$$
Let us now study the second term on the right hand side of  \eqref{eq:E2Z}. It holds
$$\begin{aligned}
\intr U_{\eps}\left(\Upsilon(\eps x)\Phi_\eps(\eps x)- \Phi_\eps(0)\Upsilon(0)\right) Z_\eps\, dx  &=\intr U_{\eps}\Upsilon(\eps x)\left(\Phi_\eps(\eps x)- \Phi_\eps(0)\right) Z_\eps\, dx\\
&+\intr U_{\eps}\left(\Upsilon(\eps x)- \Upsilon_\eps(0)\right)\Phi_\eps(0) Z_\eps\, dx.
\end{aligned}$$
In view of Remark \ref{re:regphi}, we obtain
$$
\begin{aligned}
\left|\intr U_{\eps}\Upsilon(\eps x)\left(\Phi_\eps(\eps x)- \Phi_\eps(0)\right) Z_\eps\, dx\right|&\lesssim  
\intr |\Phi_\eps(\eps x)-\Phi_\eps(0)|U_{P_\eps}^2\, dx\\ & \lesssim  \eps^2 \intr |x|^{2-\frac Nm}U_{P_\eps}^2\, dx
\\&\lesssim   \eps^{ 2}\left(\rho_\eps\over\eps\right)^{2-\frac Nm} =o(\eps\rho_\eps)\ \hbox{if}\ m>N
\end{aligned}
$$
and 
$$
\begin{aligned}
\left|\intr U_{\eps}\left(\Upsilon(\eps x)- \Upsilon_\eps(0)\right)\Phi_\eps(0) Z_\eps\, dx\right|\leq C\eps^2 \|\Phi_\eps\|_\infty\intr |x|^2 U_{P_\eps}^2\, dx\leq \eps^{\frac N 2}\rho_\eps^2=o(\eps\rho_\eps).
\end{aligned}
$$
Let us study the cubic and square term in $U_{P_{\eps}}$ in \eqref{eq:E2Z}. 
We have
$$
\begin{aligned}
\mu_{2}\intr (U_\eps^3-U_{P_{\eps}}^3-U^{3}_{-P_{\eps}}) Z_\eps\, dx  =&
3\mu_2\intr U_{P_\eps}^2U_{-P_\eps}Z_\eps\, dx+3\mu_2\intr U_{P_\eps}U_{-P_\eps}^2 Z_\eps\, dx\\
=&3\mu_2\intr U_{P_\eps}^2U_{-P_\eps}\partial_1 U_{P_\eps}\, dx-3\mu_2\intr U_{P_\eps}^2U_{-P_\eps}\partial_1 U_{-P_\eps}\, dx\\
&+3\mu_2\intr U_{P_\eps}U_{-P_\eps}^2\partial_1 U_{P_\eps}\, dx-3\mu_2\intr U_{P_\eps}U_{-P_\eps}^2\partial_1 U_{-P_\eps}\, dx\\
=&3\mu_2\intr U\left(y-2\frac{P_\eps}{\eps}\right)U^2(y)\partial_1 U(y)\, dy\\
&-3\mu_2\intr U^2\left(y+2\frac{P_\eps}{\eps}\right)U(y)\partial_1 U(y)\, dy\\
&+3\mu_2\intr U^2\left(y-2\frac{P_\eps}{\eps}\right)U(y)\partial_1 U(y)\, dy\\
&-3\mu_2\intr U\left(y+2\frac{P_\eps}{\eps}\right)U^2(y)\partial_1 U(y)\, dy.
\end{aligned}$$
By exploiting  Lemma \ref{decay}{\em -(i)} with $s=1$ and $t=3$ we deduce 
$$\begin{aligned}3\mu_2\intr U\left(y-2\frac{P_\eps}{\eps}\right)U^2(y)\partial_1 U(y)\, dy&=\mu_2\intr U\left(y-2\frac{P_\eps}{\eps}\right)\partial_1 U^3 (y)\, dy\\
&=-\mu_2\mathfrak c e^{-2\sqrt{\omega_0}\frac{\rho_\eps}{\eps}}\left(\frac{\rho_\eps}{\eps}\right)^{-\frac{N-1}{2}}(1+o(1))\end{aligned}
$$
and similarly
$$-3\mu_2\intr U\left(y+2\frac{P_\eps}{\eps}\right)U^2(y)\partial_1 U(y)\, dy=-\mu_2\mathfrak c e^{-2\sqrt{\omega_0}\frac{\rho_\eps}{\eps}}\left(\frac{\rho_\eps}{\eps}\right)^{-\frac{N-1}{2}}(1+o(1)).$$
Applying again Lemma \ref{decay}{\em -(ii)} with $s=t=2$ we infer
$$\begin{aligned}3\mu_2\intr U^2\left(y+2\frac{P_\eps}{\eps}\right)U(y)\partial_1 U(y)\, dy&=\frac 3 2\mu_2\intr U^2\left(y+2\frac{P_\eps}{\eps}\right)\partial_1 U^2(y)\, dy\\ &=\left\{\begin{aligned}&\bar{\mathfrak c}e^{-4\sqrt{\omega_0}\frac{\rho_\eps}{\eps}}\left(\frac{\rho_\eps}{\eps}\right)^{-\frac 12}(1+o(1))\quad &\hbox{if}\,\, N=2\\
&\bar{\mathfrak c}e^{-4\sqrt{\omega_0}\frac{\rho_\eps}{\eps}}\left(\frac{\rho_\eps}{\eps}\right)^{-2}\ln\frac{\rho_\eps}{\eps}(1+o(1))\quad &\hbox{if}\,\, N=3\\
\end{aligned}\right.\\
&=o\left(e^{-2\sqrt{\omega_0}\frac{\rho_\eps}{\eps}}\left(\frac{\rho_\eps}{\eps}\right)^{-\frac{N-1}{2}}\right)\end{aligned}$$
and similarly
$$
3\mu_2\intr U^2\left(y-2\frac{P_\eps}{\eps}\right)U(y)\partial_1 U(y)\, dy=o\left(e^{-2\sqrt{\omega_0}\frac{\rho_\eps}{\eps}}\left(\frac{\rho_\eps}{\eps}\right)^{-\frac{N-1}{2}}\right).
$$
Hence
$$\mu_{2}\intr (U_\eps^3-U_{P_{\eps}}^3-U^{3}_{-P_{\eps}}) Z_\eps\, dx=-2\mu_2\mathfrak c e^{-2\sqrt{\omega_0}\frac{\rho_\eps}{\eps}}\left(\frac{\rho_\eps}{\eps}\right)^{-\frac{N-1}{2}}(1+o(1)).$$
Again, by using Lemma \ref{le:Psi} and Lemma \ref{ACR},
it follows
$$
\begin{aligned}
\left|\intr\left(U_{\eps}^{2}-U^{2}_{P_{\eps}}-U^{2}_{-P_{\eps}}\right)\Psi_{\eps}Z_\eps\, dx \right|&\leq C \left[\intr U^2_{P_\eps}U_{-P_\eps}|\Psi_\eps|+\intr U^2_{-P_\eps}U_{P_\eps}|\Psi_\eps|\right]\\
&
\leq C \|\Psi_\eps\|_{L^2(\R^N)}\left(\intr U^2\left(y-2\frac{P_\eps}{\eps}\right)U^4(y)\, dy\right)^{\frac 12}\\
&
\leq C \eps^{\frac N 2}
e^{-2\sqrt{\omega_0}\frac{\rho_\eps}{\eps}}\left(\frac{\rho_\eps}{\eps}\right)^{-\frac{N-1}{2}}
\\
&=o\left(e^{-2\sqrt{\omega_0}\frac{\rho_\eps}{\eps}}\left(\frac{\rho_\eps}{\eps}\right)^{-\frac{N-1}{2}}\right).
\end{aligned}$$
By Lemma \ref{phi} and Lemma \ref{le:Psi} we  also obtain 
\[
\begin{split}
\left|\intr\Phi^{2}_{\eps}(\eps x)\Theta_{\eps}Z_\eps\, dx\right|
&
\leq C \intr |\Phi_\eps(\eps x)|^2 U_{P_\eps}^2\, dx+ C\intr |\Phi_\eps(\eps x)|^2|\Psi_\eps|\,dx \lesssim   \eps^N=o(\eps\rho_\eps)
\\
\left|\intr \Psi_{\eps}\Upsilon(\eps x)\Phi_{\eps}(\eps x)Z_\eps\, dx\right| &
\leq C \|\Phi_\eps\|_{\infty}\|\Psi_\eps\|_{L^2(\R^N)}\lesssim \eps^N=o(\eps\rho_\eps),
\end{split}\]
and the same upper bound holds for the last two terms in \eqref{eq:E2Z}, 
concluding the proof.
\end{proof}

\begin{proof}[Proof of Theorem \ref{thm:principal}: completed]

First notice that $(u,v)$ is a solution of \eqref{pro:P0} iff $(u,v)$ is a solution of \eqref{pro:P2}. Then, we look for a solution of \eqref{pro:P2} of the form given in \eqref{ans}. So that, we are lead to look for $(\varphi,\psi)\in X$ satisfying 
the system \eqref{eq:sistpro}.
Proposition \ref{pro:eqorto} yields the existence of $(\varphi, \psi)$ satisfying the second equation in \eqref{eq:sistpro}.  The first equation is solved as well, if we show that
$c_{0}=0$ where $c_{0}$ is given in \eqref{c0}.
Then, Lemma \ref{lem:L2ker} and \ref{lem:E2ker2} imply that we have  to find $d=d_\eps$ in \eqref{punti} such that  
$$
  \left[-\partial_{11}\omega(0)\mathfrak b\eps\rho_\eps-2\mu_2\mathfrak c e^{-2\sqrt{\omega_0}\frac{\rho_\eps}{\eps}}\left(\frac{\rho_\eps}{\eps}\right)^{-\frac{N-1}{2}}\right](1+o(1))=0,
$$
which is equivalent (taking into account that $\rho_\eps=d_\eps \eps\ln\frac1\eps$) to
$$(1-\sqrt\omega_0 d)\ln\eps+o(\ln\eps)=0$$
and this is solvable by $d_\eps$ such that $d_\eps\to \frac1{\sqrt{\omega_0}}$ as $\eps\to0$.
That concludes the proof.
 \end{proof}
\vskip6pt
\noindent
{\bf\Large Declarations. }
\vskip0.6cm
\noindent {\bf \large Conflict of interests. } On behalf of all authors, the corresponding author states that there is no conflict of interest.
\vskip5pt
\noindent {\bf \large Data Availability Statement.} Data sharing not applicable to this article as no datasets were generated or analyzed during the current study.

\appendix \section{Technical Lemma }\label{app:lemma}
\begin{lemma}[Lemma 3.7 in \cite{acr}]\label{ACR}
Let $u, v : \mathbb{R}^N \to \mathbb{R}$ be two positive continuous radial functions such that
\[
 u(x) \sim |x|^a e^{-b|x|}, \quad v(x) \sim |x|^{a'} e^{-b'|x|} 
 \]
as $|x| \to \infty$, where $a, a' \in \mathbb{R}$, and $b, b' >0$. Let $\xi \in \mathbb{R}^N $ such that $|\xi| \to \infty$. We denote $u_{\xi}(x)=u(x-\xi)$. Then the following asymptotic estimates hold:
\begin{itemize}
\item[(i)] If $b < b'$,
\[ 
\int_{\mathbb{R}^N} u_{\xi} v\sim e^{-b|\xi|} |\xi|^{a}. 
\]
A similar expression holds if $b > b'$, by replacing $a$ and $b$ with $a'$ and $b'$. 
\item[(ii)] If $b=b'$, suppose that $a \ge a'$. Then:
\[ \int_{\mathbb{R}^N} u_{\xi} v\sim
\begin{cases}
e^{-b|\xi|} |\xi|^{a+a'+\frac{N+1}{2}} & \text{ if } a' > -\frac{N+1}{2},\\
e^{-b|\xi|} |\xi|^{a} \log |\xi| & \text{ if } a' = -\frac{N+1}{2},\\
e^{-b|\xi|} |\xi|^{a}& \text{ if } a' < -\frac{N+1}{2}.
\end{cases} \]
\end{itemize}
\end{lemma}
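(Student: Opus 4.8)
The plan is to reduce the estimate, via a decomposition of $\R^{N}$ into regions adapted to the segment joining $0$ and $\xi$, to elementary Laplace-type integrals in one and in $N-1$ variables. Up to a rotation I may take $\xi=R\,e_{1}$ with $R=|\xi|\to\infty$. Since $u,v$ are continuous and positive, the hypotheses give constants $0<c\le C$ with
\[
c\,(1+|x|)^{a}e^{-b|x|}\le u(x)\le C\,(1+|x|)^{a}e^{-b|x|},\qquad
c\,(1+|x|)^{a'}e^{-b'|x|}\le v(x)\le C\,(1+|x|)^{a'}e^{-b'|x|}
\]
for every $x\in\R^{N}$. I would first pin down the claimed order with two-sided bounds of this type, and only at the end recover the sharp constant in ``$\sim$'' by dominated convergence on the region that carries the leading term.

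For part (i), say $b<b'$, the product $u_{\xi}v$ concentrates near the origin, where $v$ is of order one and $u_{\xi}$ of order $R^{a}e^{-bR}$. For the upper bound I would use $|x-\xi|\ge R-|x|$:
\[
\int_{\R^{N}}u_{\xi}v\ \lesssim\ e^{-bR}\int_{\R^{N}}(1+|x-\xi|)^{a}(1+|x|)^{a'}e^{b|x|}e^{-b'|x|}\,dx\ \lesssim\ e^{-bR}R^{a}\int_{\R^{N}}(1+|x|)^{|a|+a'}e^{-(b'-b)|x|}\,dx,
\]
the last integral converging because $b'-b>0$. For the lower bound and the exact constant I would work on a fixed ball $\{|x|\le1\}$, where $v\gtrsim1$ and $u(x-\xi)=\bigl(\kappa+o(1)\bigr)R^{a}e^{-bR}e^{bx_{1}}$ with $\kappa>0$ the constant from the profile of $u$; dividing by $e^{-bR}R^{a}$ and passing to the limit (dominated convergence on $\{|x|\le R/2\}$ with the bound above, plus a crude estimate showing $\{|x|>R/2\}$ — in particular the neighbourhood of $\xi$, which only contributes $O(e^{-b'R}R^{a'})$ — is of strictly smaller order) produces the limiting value. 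The case $b>b'$ is symmetric.

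The substance is part (ii), $b=b'$. Now $|x|+|x-\xi|\ge R$ with equality exactly on $\Sigma=\{t\,e_{1}:0\le t\le R\}$, so $u_{\xi}v$ concentrates along all of $\Sigma$ and the leading term comes from a tubular neighbourhood of it. Writing $x=t\,e_{1}+y'$, $y'\perp e_{1}$, the key expansion is
\[
|x|+|x-\xi|=R+\frac{R\,|y'|^{2}}{2\,t\,(R-t)}\,(1+o(1))
\]
for $|y'|$ small relative to $t$ and $R-t$; integrating the Gaussian $e^{-b(|x|+|x-\xi|)}$ in $y'\in\R^{N-1}$ yields the weight $\bigl(t(R-t)/R\bigr)^{(N-1)/2}$ and leaves
\[
\int_{\R^{N}}u_{\xi}v\ \sim\ e^{-bR}\,R^{-\frac{N-1}{2}}\int_{0}^{R}(1+R-t)^{a}\,(1+t)^{a'}\,\bigl(t(R-t)\bigr)^{\frac{N-1}{2}}\,dt .
\]
Splitting at $t=R/2$ and absorbing the prefactor: on $[0,R/2]$ one has $(1+R-t)^{a}(R-t)^{(N-1)/2}\sim R^{a+(N-1)/2}$, so this piece is $\sim e^{-bR}R^{a}\int_{0}^{R/2}(1+t)^{a'}t^{(N-1)/2}\,dt$, the last integral having size $R^{a'+(N+1)/2}$, $\log R$, or a constant according as $a'>-\tfrac{N+1}{2}$, $a'=-\tfrac{N+1}{2}$, or $a'<-\tfrac{N+1}{2}$; the $[R/2,R]$ piece is symmetric under $t\mapsto R-t$ with $a$ and $a'$ exchanged and, since $a\ge a'$, is never of larger order. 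Collecting the cases reproduces the trichotomy of (ii); the matching lower bound I would obtain by restricting this computation to a thin tube around $\Sigma$, away from its endpoints, and invoking the two-sided asymptotics.

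The remaining contributions are routine: fixed balls around $0$ and $\xi$, where the profiles of $u,v$ are replaced by the two-sided bounds above, and the tails of the confocal coordinate $|x|+|x-\xi|-R$, contribute at the same exponential rate $e^{-bR}$ with no further polynomial gain, so they are either of strictly lower order or absorbed into the leading constant. The hard part will be carrying out the transverse Gaussian reduction in (ii) uniformly in $t\in(0,R)$, despite the width $\sqrt{t(R-t)/R}$ degenerating at $t=0$ and $t=R$, and tracking precisely how the prefactors $(1+|x-\xi|)^{a}$ and $(1+|x|)^{a'}$ decide the balance of the one-dimensional integral — which is exactly where the dichotomy on the sign of $a'+\tfrac{N+1}{2}$ comes from.
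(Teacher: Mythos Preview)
The paper does not prove this lemma; it is quoted from \cite{acr} (where it appears as Lemma~3.7) and used as a black box in the appendix. So there is no argument in the paper to compare against.

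Your sketch is the standard Laplace/tube-method proof and is essentially correct: concentration at one endpoint in case~(i), and in case~(ii) the mass spreads along the whole segment $[0,\xi]$, with the transverse Gaussian in $y'$ producing the $(N-1)/2$ shift that drives the trichotomy on $a'+\tfrac{N+1}{2}$. Two small points on~(ii). First, in the subcase $a'<-\tfrac{N+1}{2}$ the leading order $e^{-bR}R^{a}$ comes \emph{precisely} from a bounded neighbourhood of the origin (where $v$ is of order one and $u_{\xi}$ is of order $R^{a}e^{-bR}$), so your remark that the endpoint balls ``contribute with no further polynomial gain'' is slightly misleading there --- they carry the main term, and your one-dimensional integral reproduces this only because you regularised $|x|^{a'}$ to $(1+|x|)^{a'}$. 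Second, the transverse Gaussian reduction is only literally valid for $t$ bounded away from $0$ and $R$; near the endpoints you must argue directly on a fixed ball and then observe that the formal contribution of $t\in(0,1)$ to your 1D integral is $O(1)$ (since $t^{(N-1)/2}$ is integrable), matching the direct estimate. Both issues are routine once noticed, and since the ``$\sim$'' in this lemma is used in the two-sided order-of-magnitude sense (no constant is specified), the global bounds you set up are exactly what is required.
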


\begin{lemma}[(Lemma A.2 in \cite{pv}]\label{decay}
Let $U_{\lambda, \mu}=\sqrt{\frac{\lambda}{\mu_1}}U(\sqrt{\lambda}x)$ where $U$ is the 
solution of \eqref{Uc}.
Let $s, t\geq 1$ and consider the fol\-lo\-wing integral
$$\Theta_{s, t}(\zeta):=\int_{\mathbb R^N}U^s_{\lambda, \mu}(x+\zeta)\partial_{x_1}U^t_{\lambda, \mu}(x)\, dx,\quad\zeta\in\mathbb R^N.$$
\begin{itemize}
\item[(i)] 
If $s<t$ then $$\Theta_{s, t}(\zeta)\sim \mathfrak c s\frac{\zeta_1}{|\zeta|}e^{-s\sqrt{\lambda}|\zeta|}|\zeta|^{-s\frac{N-1}{2}}\quad \hbox{as}\,\, |\zeta|\to+\infty;$$
\item[(ii)]
 If $s=t$ then $$\Theta_{s, t}(\zeta)\sim\left\{\begin{aligned}& \mathfrak c s\frac{\zeta_1}{|\zeta|}e^{-s\sqrt{\lambda}|\zeta|}|\zeta|^{-s(N-1)+\frac{N+1}{2}}\quad &\hbox{if}\,\, s<\frac{N+1}{N-1};\\
&\mathfrak c s\frac{\zeta_1}{|\zeta|}e^{-s\sqrt{\lambda}|\zeta|}|\zeta|^{-s\frac{(N-1)}{2}}\ln|\zeta|\quad &\hbox{if}\,\, s=\frac{N+1}{N-1};\\
&\mathfrak c s\frac{\zeta_1}{|\zeta|}e^{-s\sqrt{\lambda}|\zeta|}|\zeta|^{-s\frac{(N-1)}{2}}\quad &\hbox{if}\,\, s>\frac{N+1}{N-1}.\end{aligned}\right. $$ as $|\zeta|\to+\infty$.
\end{itemize}
Here $\mathfrak c$ denotes a positive constant. In case $N=1$ the first option in (ii) holds, namely
$$\Theta_{s, s}(\zeta)\sim\mathfrak c s\frac{\zeta_1}{|\zeta|}e^{-s|\zeta|}|\zeta|\quad \hbox{as}\,\,\ |\zeta|\to+\infty.$$
\end{lemma}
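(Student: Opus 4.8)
The plan is to reduce everything to the far-field behaviour of the ground state recalled in \eqref{eq:Udecay}: after the rescaling defining $U_{\lambda,\mu}$, both $U_{\lambda,\mu}$ and $\partial_{x_1}U_{\lambda,\mu}$ behave, as $|x|\to\infty$, like $c\,|x|^{-(N-1)/2}e^{-\sqrt\lambda|x|}$, the derivative carrying in addition the directional factor $x_1/|x|$ (recall $\partial_{x_1}U_{\lambda,\mu}=U_{\lambda,\mu}'(|x|)\,x_1/|x|$ and $U_{\lambda,\mu}'(r)/U_{\lambda,\mu}(r)\to-\sqrt\lambda$). Hence $U^s_{\lambda,\mu}(x+\zeta)\sim c_s|x+\zeta|^{-s(N-1)/2}e^{-s\sqrt\lambda|x+\zeta|}$ and $\partial_{x_1}U^t_{\lambda,\mu}(x)=tU^{t-1}_{\lambda,\mu}(x)\partial_{x_1}U_{\lambda,\mu}(x)\sim-t\sqrt\lambda\,c_t\,\tfrac{x_1}{|x|}|x|^{-t(N-1)/2}e^{-t\sqrt\lambda|x|}$, so that $\Theta_{s,t}(\zeta)$ is an overlap integral of two exponentially localized profiles, centred at $-\zeta$ and at $0$, with decay rates $s\sqrt\lambda$ and $t\sqrt\lambda$ plus one extra odd factor. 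The whole estimate is then a Laplace-type evaluation of $\int e^{-\sqrt\lambda(s|x+\zeta|+t|x|)}\times(\text{algebraic})\,dx$, governed by the minimum of $s|x+\zeta|+t|x|$.

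When $s<t$, i.e. in case (i), the function $s|x+\zeta|+t|x|$ has the unique minimum $s|\zeta|$ at $x=0$ (the larger weight $t$ penalizes $|x|$; a direct check shows that $x=0$ is a strict, hence global, minimum). I would therefore freeze $U^s_{\lambda,\mu}(x+\zeta)$ at its leading far-field value near $x=0$, namely $U^s_{\lambda,\mu}(x+\zeta)\sim c_s|\zeta|^{-s(N-1)/2}e^{-s\sqrt\lambda|\zeta|}e^{-s\sqrt\lambda\,\nu\cdot x}$ with $\nu=\zeta/|\zeta|$, and reduce to the integral $\intr\partial_{x_1}U^t_{\lambda,\mu}(x)e^{-s\sqrt\lambda\,\nu\cdot x}\,dx$, which is absolutely convergent since $t\sqrt\lambda>s\sqrt\lambda$. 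Integrating by parts in $x_1$ turns it into $s\sqrt\lambda\,\nu_1\intr U^t_{\lambda,\mu}(x)e^{-s\sqrt\lambda\,\nu\cdot x}\,dx$, and by the radial symmetry of $U_{\lambda,\mu}$ the last integral is a positive constant depending only on $s\sqrt\lambda$. This yields the prefactor $\mathfrak c\,s\,\zeta_1/|\zeta|$ together with $e^{-s\sqrt\lambda|\zeta|}|\zeta|^{-s(N-1)/2}$; the errors from replacing $|x+\zeta|$ by its Taylor expansion and $U_{\lambda,\mu}$ by its asymptotics are of strictly lower order.

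When $s=t$, i.e. in case (ii), the two decay rates coincide and $s|x+\zeta|+t|x|=s(|x+\zeta|+|x|)\ge s|\zeta|$ with equality exactly on the segment $[-\zeta,0]$, so the integral localizes in a thin tube around that segment. I would introduce \emph{prolate-spheroidal} coordinates $x=-\theta\zeta+w$, $\theta\in(0,1)$, $w\perp\zeta$: then $s|x+\zeta|+t|x|=s|\zeta|+\tfrac{s|w|^2}{2\theta(1-\theta)|\zeta|}+\cdots$, the transverse integral in $w\in\R^{N-1}$ is Gaussian of volume $\sim(\theta(1-\theta)|\zeta|)^{(N-1)/2}$, while along the segment $x_1/|x|\equiv-\zeta_1/|\zeta|$ is essentially constant. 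Collecting the powers $|x+\zeta|^{sa}|x|^{ta}$ with $a=-(N-1)/2$ and the Jacobian, one is left with the overall factor $\mathfrak c\,s\,\tfrac{\zeta_1}{|\zeta|}e^{-s\sqrt\lambda|\zeta|}|\zeta|^{-s(N-1)+\frac{N+1}{2}}$ times the Beta-type integral $\int_0^1\theta^{\frac{N-1}{2}(1-s)}(1-\theta)^{\frac{N-1}{2}(1-s)}\,d\theta$. That integral converges precisely when $\tfrac{N-1}{2}(1-s)>-1$, i.e. $s<\tfrac{N+1}{N-1}$, giving the first alternative. At $s=\tfrac{N+1}{N-1}$ the spheroidal expansion must be cut off at distance $O(1/|\zeta|)$ from the two centres, and the ensuing logarithmic divergence at $\theta=0$ and $\theta=1$ produces the $\ln|\zeta|$ and the power $|\zeta|^{-s(N-1)/2}$ of the second alternative. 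For $s>\tfrac{N+1}{N-1}$ the tube contribution is subdominant and $\Theta_{s,t}$ is governed by the two centres $x=0$ and $x=-\zeta$: at each of them the naive leading term vanishes, since $\intr\partial_{x_1}U^t_{\lambda,\mu}=0$ by oddness, so I would expand to next order and integrate by parts as in case (i), obtaining a contribution $\mathfrak c\,s\,\tfrac{\zeta_1}{|\zeta|}e^{-s\sqrt\lambda|\zeta|}|\zeta|^{-s(N-1)/2}$ from each centre, i.e. the third alternative. The case $N=1$ is the degenerate version of (ii): there is no transverse direction, the tube is the interval $[-\zeta,0]$, $a=0$, and the longitudinal integral simply contributes the factor $|\zeta|$, which is the first alternative with $N=1$.

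The main obstacle is the bookkeeping in case (ii) near the threshold $s=\tfrac{N+1}{N-1}$: one must (a) justify the localization in the tube and control, uniformly for $\theta$ bounded away from $0$ and $1$, the errors from replacing $|x+\zeta|$, $|x|$ and the profiles by their asymptotic expressions; (b) in the regimes $s\ge\tfrac{N+1}{N-1}$, patch the tube estimate with the expansions at the two centres and verify that no intermediate region contributes at the same order; and (c) track the direction factor $x_1/|x|$ carefully, since it is only approximately constant along the segment and genuinely oscillates near the centres — it is exactly this factor, combined with the vanishing of $\intr\partial_{x_1}U^t_{\lambda,\mu}$, that forces both the $\zeta_1/|\zeta|$ prefactor and the smaller power $-s(N-1)/2$ (rather than $-s(N-1)+\tfrac{N+1}{2}$) in the third alternative.
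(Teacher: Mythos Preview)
The paper does not prove this lemma: it is quoted verbatim from \cite{pv} (and is closely tied to Lemma~\ref{ACR}, quoted from \cite{acr}), so there is no ``paper's own proof'' to compare your attempt against. Your sketch is therefore an independent contribution rather than a reconstruction.

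That said, your outline is the standard Laplace-type analysis and is essentially correct in spirit. A few remarks. In case~(i) your integration-by-parts trick to extract the factor $s\sqrt\lambda\,\zeta_1/|\zeta|$ is clean and is exactly how one avoids dealing directly with the odd, non-radial factor $x_1/|x|$; once this is done, the remaining integral is of the form covered by Lemma~\ref{ACR} with $b=s\sqrt\lambda<b'=t\sqrt\lambda$, which immediately gives the stated asymptotics. It would be worth stating this reduction explicitly, since it makes the error control in~(i) essentially a corollary of Lemma~\ref{ACR} rather than a separate Laplace computation.

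In case~(ii) your tube/spheroidal heuristic gives the right trichotomy and the right exponents, but the part you flag as ``the main obstacle'' is genuinely the heart of the proof and is only sketched here. Two points deserve more care. First, in the subcritical regime $s<\tfrac{N+1}{N-1}$ one should check that the contribution of the two ``caps'' near $x=0$ and $x=-\zeta$ (where your asymptotic replacement of $U$ fails) is indeed of lower order than the tube contribution; this is where the Beta integral actually converging matters. Second, in the supercritical regime the leading term at each centre vanishes by oddness, as you note, and the next-order contribution comes from expanding $U^s(x+\zeta)$ to first order in $x$: this produces a factor $\nabla U^s(\zeta)\sim -s\sqrt\lambda\,\tfrac{\zeta}{|\zeta|}U^s(\zeta)$, which again reduces to Lemma~\ref{ACR} (now with $a=a'=-s(N-1)/2$ in the regime $a'<-\tfrac{N+1}{2}$). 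Making this reduction explicit would turn your sketch into a proof and would show that all three alternatives in~(ii), like~(i), are ultimately consequences of Lemma~\ref{ACR}.
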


\bibliography{PartiallyConcentrating}
\bibliographystyle{abbrv}
\end{document}